\begin{document}
\title{Equivariant Chern Weil Forms and The Families Index Theorem}
\author[Richard Wedeen]{Richard Wedeen}
\address[Richard Wedeen]{Department of Mathematics,
   University of Texas at Austin,
   2515 Speedway,
   Austin TX, 78712, USA}
\email{rwedeen@math.utexas.edu}

\begin{abstract}
We apply the equivariance $\raw$ families principle to a geometric family of Clifford module bundles with an action of a compact Lie group $G$ to prove an equivariant version of Bismut's families index theorem on the differential Borel quotient of the geometric family.
\end{abstract}
\maketitle

In certain incarnations of the families index theorem, one considers a geometric family of Clifford module bundles over compact even dimensional Riemannian manifolds parametrized by a base $S$, i.e. an iterated bundle
\begin{equation}\label{iterated bundle}
   \mathscr E \raw X \raw S
\end{equation}
where $X\raw S$ is a fiber bundle with  compact even dimensional fibers, a horizontal distribution, and a metric on the vertical tangent bundle $T(X/S)$, and $\mathscr E \raw X$ is a $\Cl(T(X/S))$-module bundle with compatible connection.

This is the setting considered in \cite{Bismut:1986uh}, where Bismut adapted Quillen's ideas in \cite{QUILLEN198589} to construct a one-parameter family of superconnections
\begin{equation}
   \mathbb B^t: \Omega_S(\sH) \raw \Omega_S(\sH)
\end{equation}
on the infinite dimensional vector bundle $\sH$ of fiberwise sections of $\E \raw X$ over $S$ and proved that
\begin{equation}\label{index theorem}
   \lim_{t\raw 0} \ch(\mathbb B^t) = \int_{X/S} \hat A(\Omega^{X/S})\ch_{\mathbb \E/\mathbb S}(F^{\E/\mathbb S})
\end{equation}
where $\hat A(\Omega^{X/S})$ is the $\hat A$-genus of the vertical curvature and $\ch_{\mathbb \E/\mathbb S}(F^{\E/\mathbb S})$ is the relative Chern character of the twisting curvature of $\E$.

In this paper we consider the case when (\ref{iterated bundle}) has an action of a Lie group $G$ preserving the geometric data. A similar situation was considered in \cite{EquivDet} which studied the case of a $G$-action on a geometric family $X \raw S$ which induced a $G$-action on the determinant line bundle
\begin{equation}
   \mathcal L \raw S.
\end{equation}
There, the $G$-action was used to construct an iterated map of differential Borel quotients
\begin{equation}
   (\mathcal L_G)_\nabla \raw (S_G)_\nabla \raw B_\nabla G
\end{equation}
over the simplicial sheaf $B_\nabla G$ of principal $G$-bundles with connection introduced and studied in \cite{BNablaG}. The $G$-equivariant connection on $\mathcal L \raw S$ from \cite{Bismut:1986wl} was used to construct a connection on $(\mathcal L_G)_\nabla \raw (S_G)_\nabla$ from which the moment map and curvature were computed. The curvature was identified with the $G$-equivariant curvature on $\mathcal L$ using the identification of $\Omega_{(X_G)_\nabla}$ with the Weil-model for $X$ \cite{BNablaG}.

We adapt these ideas and consider a geometric family of differential Borel quotients
\begin{equation}
 (\E_G)_\nabla \raw (X_G)_\nabla \raw (S_G)_\nabla
\end{equation}
fibered over $B_\nabla G$. From this we construct a family of superconnections $\mathbb B^t_G$ on a bundle $(\sH_G)_\nabla \raw (S_G)_\nabla$ whose curvature is an equivariant form in the Weil-model for $S$. We also construct equivariant curvatures $\Omega_G^{X/S}$ and $F_G^{\mathscr E/\mathbb S}$ on $(X_G)_\nabla$. We then prove our main theorem
\begin{equation}
   \lim_{t\raw 0}\ch(\mathbb B^t_G) = \int_{X/S} \hat A(\Omega^{X/S}_G) \ch_{\E/\mathbb S}(F^{\E/\mathbb S}_G).
\end{equation}

An outline of the paper is as follows. In \S \ref{Section 2} we set up the necessary differential geometry on manifolds and generalized manifolds, i.e. simplicial sheaves on the site of manifolds. In particular we construct the connection and curvature of the differential Borel quotient of a vector bundle with $G$-action and $G$-equivariant connection. In \S \ref{Section 3} we review the families index theorem as proved by Bismut following \cite{berline2003heat}. Finally in \S \ref{Section 4} we construct the superconnection $\mathbb B^t_G$ and prove the main result.

I thank Dan Freed, Charlie Reid, and Dan Weser for helpful discussions and the mathematics department at UT Austin that made this paper possible.
\section{Preliminaries}\label{Section 2}

\subsubsection{Principal Bundles}

This section is a summary of the discussion appearing in \cite{EquivDet}. Let $G,H$ be Lie groups and let $\pi: P \raw X$ be a principal $H$-bundle with a $G$-action. We endow $\pi$ with a $G$-invariant connection, i.e. a horizontal distribution $W \subset TP$ that is invariant under the $G$-action on the left and the $H$-action on the right.

Let $r: Q\raw M$ be a principal $G$-bundle.

\begin{notation} Given a $G$-space $Y$ we can form the associated bundle
\begin{equation}
   Y_Q : = Q \times_G Y.
\end{equation}
We will denote by $q: Q \times Y \raw Y_Q$ the quotient and by $p: Q \times Y \raw Y$ the projection. Finally, if $E \raw Y$ is a vector bundle, we will write 
\begin{equation}
\underline E:= p^\ast E
\end{equation}
to denote the pullback vector bundle of $E$ over $Q\times Y$.
\end{notation}

The map $Q \times P \xrightarrow{id \times \pi} Q \times X$ is $G$-equivariant and descends to a map $\pi_Q: P_Q \raw X_Q$ which is a principal $H$-bundle.
Suppose given a connection on $r$, i.e. a $G$-invariant horizontal distribution 
\begin{equation}\label{V distribution}
V \subset TQ.
\end{equation} 
We use the given data to construct a connection on $\pi_Q$.
\begin{construction}\label{distribution}
The distribution $V \oplus W \subset TQ \oplus TP$ is $G$-invariant and descends to an $H$-invariant horizontal distribution $q_\ast(V \oplus W) \subset TP_Q$.
\end{construction}

Denote by $\g, \h$ the Lie algebras of $G,H$ and choose a basis $(e_a)$ of $\g$. 
Let $\Phi \in \Omega^1_Q(\g)$ be the connection 1-form on $Q$ with $\ker(\Phi) = V$. We will write $\Phi := \phi^a e_a$ where the coefficients $\phi^a \in \Omega^1_Q$ are $G$-invariant. Let $\Theta \in \Omega^1_P(\h)$ be the $G$-invariant connection 1-form on $P$ satisfying $\ker(\Theta) = W$. 
We set $\omega:= \omega^a e_a \in \Omega^2_Q(\g)$ to be the curvature of $\Phi$ and $\Omega \in \Omega^2_P(\h)$ the curvature of $\Theta$.

Finally, let $\Theta_Q \in \Omega^1_{P_Q}(\h) \simeq \Omega^1_{Q \times P}(\h)_{\basic}$ be the connection 1-form on $\pi_Q$ with $\ker(\Theta_Q) = q_\ast(V\oplus W)$ and let $\Omega_Q \in \Omega^2_{P_Q}(\h) \simeq \Omega^2_{Q\times P}(\h)_{\basic}$ denote its curvature.

\begin{lemma}\label{associated curvature}
\begin{align}
\Theta_Q &= \Theta - \phi^a\iota_a\Theta \label{Q connection}\\
\Omega_Q = \Omega - \phi^a\iota_a&\Omega + \frac{1}{2}\phi^a\phi^b\iota_b\iota_a\Omega - \omega^a\iota_a\Theta \label{Q curvature}
\end{align}
\end{lemma}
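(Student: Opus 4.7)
\medskip

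\noindent\textbf{Proof proposal.} The plan is to verify the formula for $\Theta_Q$ directly from its characterization, then compute its curvature using the structure equations and the hypothesis that $\Theta$ is $G$-invariant.

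First I would check the formula $\Theta_Q = \Theta - \phi^a\iota_a\Theta$ by identifying its pullback to $Q\times P$ as the unique $\h$-valued 1-form that is $G$-basic, restricts to $\Theta$ on $H$-fundamental vertical vectors, and vanishes on the horizontal distribution $V\oplus W$. On a vector $(u,v)\in T_qQ\oplus T_pP$, the right-hand side evaluates to $\Theta(v) - \phi^a(u)\cdot\Theta(e_a^P)_p$, which plainly vanishes on $V\oplus W$ (on $V$ because $\Phi$ kills $V$; on $W$ because $\Theta$ kills $W$). For $G$-basicity, evaluating on the fundamental vector field $(e_a^Q, e_a^P)$ of the diagonal action and using $\Phi(e_b^Q) = e_b$ gives $\Theta(e_a^P) - \Theta(e_a^P) = 0$, and $G$-invariance follows from $G$-invariance of $\Theta$ and of the $\phi^a$. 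Thus this form descends to $\Theta_Q$.

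For the curvature, I would compute $\Omega_Q = d\Theta_Q + \tfrac12[\Theta_Q,\Theta_Q]$ term by term. Using the structure equation $d\phi^a = \omega^a - \tfrac12 c^a_{bc}\phi^b\phi^c$, the structure equation $d\Theta = \Omega - \tfrac12[\Theta,\Theta]$, and Cartan's magic formula $d(\iota_a\Theta) = -\iota_a d\Theta$ (from $\mathcal L_{e_a^P}\Theta = 0$), one obtains
\begin{equation*}
d\Theta_Q = \Omega - \tfrac12[\Theta,\Theta] - \omega^a\iota_a\Theta + \tfrac12 c^a_{bc}\phi^b\phi^c\iota_a\Theta - \phi^a\iota_a\Omega + \phi^a[\iota_a\Theta,\Theta].
\end{equation*}
Expanding the bracket gives
\begin{equation*}
\tfrac12[\Theta_Q,\Theta_Q] = \tfrac12[\Theta,\Theta] - [\Theta,\phi^a\iota_a\Theta] + \tfrac12\phi^a\phi^b[\iota_a\Theta,\iota_b\Theta],
\end{equation*}
and a short check shows $[\Theta,\phi^a\iota_a\Theta] = \phi^a[\iota_a\Theta,\Theta]$, so the cross terms cancel.

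It then remains to identify the leftover $c$-symbol and bracket terms with the desired $\tfrac12\phi^a\phi^b\iota_b\iota_a\Omega$ term. Computing $\iota_b\iota_a\Omega$ from the structure equation, together with the commutator identity $[e_a^P, e_b^P] = c^c_{ab}e_c^P$ and $\mathcal L_{e_b^P}\Theta = 0$, yields $\iota_b\iota_a\Omega = c^c_{ab}\iota_c\Theta + [\iota_a\Theta,\iota_b\Theta]$, and relabeling indices matches the two pieces. The arithmetic is routine; the main obstacle is bookkeeping the sign conventions in the graded bracket of $\h$-valued forms and verifying the cancellation of the cross-bracket $[\Theta,\phi^a\iota_a\Theta]$, since this is what eliminates any explicit $[\Theta,\Theta]$-type term from the final formula and leaves a clean Weil-model-style expression.
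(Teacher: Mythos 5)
The paper does not prove Lemma~\ref{associated curvature} itself but defers to \cite{EquivDet}, so there is no in-paper proof to compare against. That said, your direct verification is the natural approach and is essentially correct: you characterize $\Theta - \phi^a\iota_a\Theta$ by showing it is $G$-basic, kills $V\oplus W$, and restricts correctly on $H$-fundamental vectors, and then you compute $\Omega_Q = d\Theta_Q + \tfrac12[\Theta_Q,\Theta_Q]$ using the two structure equations, Cartan's formula (via $\mathcal L_{e_a^P}\Theta=0$), and the bracket identity $[\Theta,\phi^a\iota_a\Theta]=\phi^a[\iota_a\Theta,\Theta]$, which is the right cancellation to make the $[\Theta,\Theta]$-type terms disappear. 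The reduction of the leftover to $\tfrac12\phi^a\phi^b\iota_b\iota_a\Omega$ via the identity for $\iota_b\iota_a\Omega$ is also the correct final step.

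One caveat worth flagging, since you yourself single out sign conventions as the main obstacle: you state $[e_a^P,e_b^P]=c^c_{ab}e_c^P$, which is the \emph{right}-action convention, whereas the paper takes $G$ to act on $P$ on the \emph{left}, for which $[e_a^P,e_b^P]=-c^c_{ab}e_c^P$. With the left-action sign your identity would read $\iota_b\iota_a\Omega = -c^c_{ab}\iota_c\Theta + [\iota_a\Theta,\iota_b\Theta]$, and at first glance the $c$-terms would then fail to match the $\tfrac12 c^a_{bc}\phi^b\phi^c\iota_a\Theta$ coming from $d\phi^a$. This is compensated once one is careful that the $\iota_a$ appearing in (\ref{Q connection})--(\ref{Q curvature}) is contraction with the fundamental vector field of the diagonal $G$-action on $Q\times P$ (a right action, acting on the $P$-factor by $-e_a^P$). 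So the argument does close up, but I would encourage you to fix a single consistent sign for the diagonal action at the outset rather than relying on the right-action commutator rule, since as written the two conventions you invoke (left $G$-action on $P$, right-action Lie bracket) are in tension.
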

See \cite{EquivDet} for a proof of this calculation.

\subsubsection{Associated Bundles}

We now consider the case where $\pi: P \raw X$ is the principal $H:= GL_N(\C)$ bundle of frames for a complex vector bundle $E \raw X$ of rank $N$ with a $G$-action. 
This action induces a left $G$-action on $\pi$ that commutes with the $H$-action on the right.

Pulling back $E$ along $\pi$ gives the trivial bundle of rank $N$ over the bundle of frames
\begin{equation}\label{untwisting}
\begin{tikzcd}
   \underline \C^N \arrow[r] \arrow[d] & E \arrow[d] \\
   P \arrow[r, "\pi"] & X
\end{tikzcd}
\end{equation}
This implies that sections of $E$ can be expressed as $H$-invariant functions on $P$ valued in $V$, and more generally,
\begin{equation}\label{form id}
   \Omega_X(E) \simeq \Omega_P(\C^N)_{\basic}
\end{equation}
where the basic forms are those that are horizontal and $H$-invariant under the action of $H$ on both $P$ and $\C^N$.

Suppose given a $G$-equivariant covariant derivative 
\begin{equation}
\nabla: \Omega^0_X(E) \raw \Omega^1_X(E)
\end{equation}
on $E$.
Using the identification (\ref{form id}), $\nabla$ becomes a derivation on $\Omega_P(\C^N)_{\basic}$ and can be expressed as
\begin{equation}\label{untwisted derivative}
\nabla = d + \Theta
\end{equation}
where $\Theta \in \Omega^1_P(\gl_N)$ is a $G$-invariant connection 1-form on $P$ thought of as a matrix of 1-forms. Applying \ref{associated curvature}, we can construct a connection 1-form $\Theta_Q \in \Omega^1_{P_Q}(\gl_N)$ on $\pi_Q: P_Q \raw X_Q$.

We now proceed to construct the covariant derivative on $E_Q$ associated to this connection. First, we note that there are identifications
\begin{equation}\label{associated id}
\begin{split}
\Omega_{X_Q}(E_Q) &\simeq \Omega_{P_Q}(\C^N)_{\bas_H}\\
&\simeq \Omega_{Q\times P}(\C^N)_{\bas_{G\times H}}\\
&\simeq \Omega_{Q \times X}(\underline E)_{\bas_{G}}.
\end{split}
\end{equation}

\begin{lemma}\label{Q covariant derivative}
The connection $\Theta_Q$ induces a covariant derivative on $E_Q \raw X_Q$, expressed under the identification (\ref{associated id}) as a derivation $\nabla_Q: \Omega_{Q\times X}(\underline E)_{\bas} \raw \Omega_{Q\times X}(\underline E)_{\bas}$, given by
\begin{equation}
   \nabla_Q := \nabla - \phi^a\tau_a.
\end{equation}
where $\nabla$ is understood to be the covariant derivative on $\underline E$ obtained by pulling back the covariant derivative on $E$ by $p$.
Here, $\tau_a \in \End(\underline E)$ is the endomorphism
\begin{equation}\label{endomorphism}
\tau_a := \mathcal L_{\xi_a} - \nabla_{\xi_a}
\end{equation} 
where $\xi_a$ is the vector field generated by the action of $e_a$ on $Q \times X$and $\mathcal L_{\xi_a}$ is the Lie derivative on $\Omega_{Q\times X}(\underline E)$.
\end{lemma}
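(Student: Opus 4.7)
The plan is to apply Lemma \ref{associated curvature} and transport the covariant derivative $d + \Theta_Q$ on $E_Q$ through the chain of isomorphisms in (\ref{associated id}). The first two isomorphisms carry $\nabla_Q$ to the operator $d_{Q\times P} + \Theta_Q$ on $\Omega_{Q\times P}(\C^N)_{\bas_{G\times H}}$, with $\Theta_Q = \Theta - \phi^a\iota_a\Theta$ by (\ref{Q connection}). Under the third isomorphism, $d_{Q\times P} + \Theta$ corresponds to the pullback covariant derivative $\nabla$ on $\underline E = p^*E$, so the task reduces to identifying the operator on $\Omega_{Q\times X}(\underline E)_{\bas_G}$ corresponding to $-\phi^a\iota_a\Theta$.

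Since $\Theta$ is pulled back from $P$, contraction with the fundamental vector field $\xi_a^Q + \xi_a^P$ on $Q\times P$ reduces $\iota_a\Theta$ to $\iota_{\xi_a^P}\Theta$, a $\gl_N$-valued function acting on $H$-invariant $\C^N$-valued functions by multiplication. The key identification is that this action corresponds to the endomorphism $\tau_a$ of $\underline E$. To see this I would first work on $E$ alone: using the standard frame-bundle formula for any lift, since $\xi_a^P$ lifts $\xi_a^X$ along $\pi$, one has $\nabla_{\xi_a^X}s \leftrightarrow \xi_a^P(f) + \iota_{\xi_a^P}\Theta \cdot f$ for a section $s$ with representation $f$; moreover the induced $G$-action on $P$ implements the $G$-action on sections as the standard action on $\C^N$-valued functions, giving $\mathcal L_{\xi_a^X} s \leftrightarrow \xi_a^P(f)$. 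Rearranging yields $\iota_{\xi_a^P}\Theta \cdot f \leftrightarrow \tau_a^X s$, where $\tau_a^X \in \End(E)$ is the analog of $\tau_a$ on $E$. This identification extends along $p$ because $\tau_a = p^*\tau_a^X$, a direct check using the $G$-equivariance of $\nabla$ together with the diagonal decomposition $\xi_a = \xi_a^Q + \xi_a^X$.

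Combining these steps gives $\nabla_Q = \nabla - \phi^a\tau_a$ on $\Omega_{Q\times X}(\underline E)_{\bas_G}$. It remains to verify that $\tau_a$ is $C^\infty$-linear so that it genuinely lies in $\End(\underline E)$, which is immediate from the Leibniz rule for $\mathcal L_{\xi_a}$ and $\nabla_{\xi_a}$ with the derivative terms canceling. The main delicate point I anticipate is keeping sign conventions straight for the Lie derivative on sections of a $G$-equivariant bundle (pullback vs.\ pushforward) and ensuring compatibility with the convention for the induced $G$-action on the frame bundle $P$; once these are pinned down, the rest is a routine unwinding of the identifications in (\ref{associated id}).
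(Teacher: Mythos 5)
Your proposal follows the same route as the paper: transport $\nabla_Q = d + \Theta_Q$ along the identifications (\ref{associated id}), substitute $\Theta_Q = \Theta - \phi^a\iota_a\Theta$ from Lemma \ref{associated curvature}, recognize $d_Q + d_P + \Theta$ as $p^\ast\nabla$, and identify $\iota_a\Theta$ with $\tau_a$. The paper asserts this last identification in a single sentence; you helpfully spell it out via the frame-bundle description and the observation that $\xi_a^P$ lifts $\xi_a^X$.

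However, there is a sign slip precisely at the step you flag as delicate. Your two intermediate facts are $\nabla_{\xi_a^X}s \leftrightarrow \xi_a^P(f) + \iota_{\xi_a^P}\Theta\cdot f$ and $\mathcal L_{\xi_a^X}s \leftrightarrow \xi_a^P(f)$; subtracting as $\tau_a^X = \mathcal L_{\xi_a^X} - \nabla_{\xi_a^X}$ yields $\tau_a^X s \leftrightarrow -\iota_{\xi_a^P}\Theta\cdot f$, not $+\iota_{\xi_a^P}\Theta\cdot f$ as you state after ``rearranging.'' That sign is not cosmetic: fed into $d + \Theta_Q = d + \Theta - \phi^a\iota_a\Theta$ it determines whether the result reads $\nabla - \phi^a\tau_a$ or $\nabla + \phi^a\tau_a$. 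You anticipate the issue in your closing remark, but the proof as written commits to signs that are internally inconsistent with its conclusion, so this needs to be pinned down explicitly --- most likely by tracking the convention for the fundamental vector field of $\h = \gl_N$ acting on $H$-equivariant $\C^N$-valued functions, together with the sign chosen for the infinitesimal $G$-action in the Lie derivative. Once that is sorted, the rest of the argument matches the paper.
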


\begin{proof}
As a derivation on $\Omega_{Q\times P}(\C^N)_{\bas_{G\times H}}$, the covariant derivative is expressed as
\begin{equation}
\begin{split}
   \nabla_Q &= d + \Theta_Q\\
      &= d_Q + d_P + \Theta - \phi^a\iota_a\Theta.
\end{split}
\end{equation}
where $d_P, d_Q$ are the de Rham differentials on $P$ and $Q$.
The term $d_Q + d_P + \Theta$ on $\Omega_{Q\times P}(\C^N)$ becomes identified with $d_Q + \nabla = p^\ast \nabla$ on $\Omega_{Q\times X}(\underline E)$ and the term $\iota_a\Theta \in \Omega^0_{Q\times P}(\gl_N)$ is identified with $\tau_a \in \End(\underline E)$.
\end{proof}

The following lemma is standard and we state it without proof.

\begin{lemma}\label{Q curvature identification}
Under the identification $\Omega_{P_Q}(\gl_N)_{\bas_{H}} \simeq \Omega_{X_Q}(\End(E_Q))$ the curvature of $\nabla_Q$ is given by (\ref{Q curvature}), i.e.
\begin{equation}
   \nabla_Q^2 = \Omega_Q
\end{equation}
\end{lemma}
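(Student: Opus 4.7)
The plan is to reduce the computation to the principal bundle $P_Q \to X_Q$, where the connection form is known explicitly. By the proof of Lemma \ref{Q covariant derivative}, under the identification $\Omega_{X_Q}(E_Q) \simeq \Omega_{P_Q}(\C^N)_{\bas_H}$ the covariant derivative $\nabla_Q$ acts as the derivation $d + \Theta_Q$ on $\C^N$-valued basic forms. I would first square this operator to obtain
\[
\nabla_Q^2 = d\Theta_Q + \Theta_Q \wedge \Theta_Q,
\]
where the wedge uses matrix multiplication in $\gl_N$. The next step is to recognize this as the structure-equation curvature: since the Lie bracket on $\gl_N$ is the matrix commutator, $d\Theta_Q + \Theta_Q \wedge \Theta_Q = d\Theta_Q + \tfrac{1}{2}[\Theta_Q, \Theta_Q] = \Omega_Q$, and the explicit expression (\ref{Q curvature}) is already in hand from Lemma \ref{associated curvature} applied to the principal $GL_N(\C)$-bundle of frames. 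Passing through the endomorphism version of the basic-form identification, $\Omega_{P_Q}(\gl_N)_{\bas_H} \simeq \Omega_{X_Q}(\End E_Q)$, then presents $\Omega_Q$ as an $\End(E_Q)$-valued 2-form on $X_Q$, completing the argument.

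The main subtlety I anticipate is not computational but the translation between the principal bundle and the associated vector bundle: one must verify that $\nabla_Q^2$, viewed as an operator on $\Omega_{X_Q}(E_Q)$, is $\Omega^0_{X_Q}$-linear and therefore represents a section of $\End(E_Q)$-valued 2-forms, and moreover that under (\ref{associated id}) it corresponds to wedge multiplication by the matrix-valued form $\Omega_Q$. This is the standard compatibility between the connection-form curvature of a principal $GL_N$-bundle and the operator curvature of its induced covariant derivative on the associated rank-$N$ vector bundle; for the defining representation the derivative of the representation is the identity on $\gl_N$, which makes the translation essentially tautological and is presumably why the text states the lemma without further proof.
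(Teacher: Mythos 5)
Your argument is correct and is precisely the standard one that the paper alludes to when it says it ``states the lemma without proof.'' The identification of $\nabla_Q$ with $d + \Theta_Q$ on $\Omega_{P_Q}(\C^N)_{\bas_H}$ comes directly from the proof of Lemma \ref{Q covariant derivative}, the computation $(d + \Theta_Q)^2 = d\Theta_Q + \Theta_Q \wedge \Theta_Q = d\Theta_Q + \tfrac12[\Theta_Q,\Theta_Q] = \Omega_Q$ is the structure equation for a $\gl_N$-valued connection form, and the passage to $\Omega_{X_Q}(\End(E_Q))$ is exactly the tensoriality and defining-representation remark you make. Since the paper offers no proof of its own, there is nothing to compare against; you have supplied the expected standard argument correctly.
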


\subsubsection{Generalized Manifolds}

We now enter the category of generalized manifolds with the aim of constructing a universal connection and covariant derivative. We first review some background on simplicial sheaves and refer the reader to \cite{BNablaG} for proofs and details.

Let $\Omega^k : \text{Man}^{op} \raw \Delta\text{-Set}$ be the simplicial sheaf that assigns a test manifold $M$ the set $\Omega^k_M$ of $k$-forms on $M$. The \emph{simplicial de Rham complex} \cite{BNablaG} is defined to be the simplicial sheaf
\begin{equation}\label{de Rham}
\Omega := \big(\bigoplus_{k\geq 0} \Omega^k , d\big)
\end{equation}
which assigns an $n$-dimensional test manifold $M$ the differential graded algebra of forms $\Omega_M$ on $M$.

We will have reason to consider formal series of forms in the completion of (\ref{de Rham}) and we review the construction here. For $k \geq 0$ let $\Omega^{\geq k} \subset \Omega$ be the ideal generated by forms of degree $k$. There is an inverse system
\begin{equation}
\cdots \raw \Omega/\Omega^{\geq k+1} \raw \Omega/\Omega^{\geq k} \raw \cdots \raw \Omega/\Omega = 0
\end{equation}
The completion is defined as the limit
\begin{equation}\label{completion}
\begin{split}
   \overline \Omega := \lim_{\law} \left(\cdots \raw \Omega/\Omega^{\geq k+1} \raw \Omega/\Omega^{\geq k} \raw \cdots\right)
\end{split}
\end{equation}
and can be identified with the direct product
\begin{equation}
\overline \Omega \simeq \prod_{k\geq 0} \Omega^k.
\end{equation}
We will later use the completion to consider forms of unbounded degree on generalized manifolds. For now, we note that for a finite dimensional manifold $M$, the completion is isomorphic to the ordinary de Rham complex
\begin{equation}
\overline \Omega_M \simeq \Omega_M.
\end{equation} 
The Koszul complex associated to a vector space $V$ is the differential graded algebra generated by $\Lambda^1 V$. It is given by
\begin{equation}
\Kos(V) := \Lambda V \otimes \Sym(V)
\end{equation}
with differential
\begin{equation}
d_K(v) = \tilde v, \hspace{.5cm} d_K(\tilde v) = 0
\end{equation}
where $v \in \Lambda^1 V$ and $\tilde v \in \Sym^1(V)$ is $v$ regarded as an element of the symmetric algebra of total degree 2. As in (\ref{completion}) we let $\Kos^{\geq k}$ be the ideal generated by elements of degree $k$ and we define the \emph{completed Koszul algebra} to be
\begin{equation}
\overline \Kos(V) := \lim_{\law} \big(\cdots \raw \Kos(V)/\Kos^{\geq k+1}(V) \raw \Kos(V)/\Kos^{\geq k}(V) \raw \cdots \big)
\end{equation}
which can be identified with
\begin{equation}
\overline \Kos(V) \simeq \Lambda V \otimes \overline \Sym(V)
\end{equation}
where $\overline \Sym(V)$ is the algebra of formal power series on $V^*$.

We now recall some theorems from \cite{BNablaG} and their corresponding statements on completions for future use.
\begin{theorem}[\cite{BNablaG} Theorem 8.1]
Let $V$ be a finite dimensional real vector space. Then there is an isomorphism 
\begin{equation}\label{kos iso}
\eta: \Kos(V^*) \xrightarrow{\sim} \Omega_{\Omega^1 \otimes V}
\end{equation}
 defined on $\ell \in \Lambda^1(V^\ast)$ by
\begin{equation}\label{kos iso 2}
\begin{split}
\eta(\ell): \Omega^1 \otimes &V \raw \Omega^1 \\
\alpha^i v_i &\mapsto \langle \ell, v_i \rangle \alpha^i 
\end{split}
\end{equation}
and extended to $\Kos(V^\ast)$ by requiring that it be a morphism of differential graded algebras.
\end{theorem}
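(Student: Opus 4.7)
The plan is to verify $\eta$ is a well-defined morphism of differential graded algebras and then establish bijectivity. For well-definedness, the prescribed $\eta(\ell)$ is manifestly natural in the test manifold $M$, since pullback of 1-forms respects the $V$-factor decomposition; hence $\eta$ lands in $\Omega^1_{\Omega^1 \otimes V}$. Because $\Kos(V^*)$ is freely generated as a graded-commutative DGA by the degree-one space $\Lambda^1 V^*$, the extension is forced by $\eta(v^i \wedge v^j) = \eta(v^i)\wedge\eta(v^j)$ and $\eta(\tilde v^i) = d\,\eta(v^i)$, and graded commutativity of $\Omega_{\Omega^1 \otimes V}$ together with $d^2 = 0$ make this extension consistent.

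To analyze $\eta$ concretely, pick a basis $\{v_i\}$ of $V$ with dual $\{v^i\}$ and set $\vartheta^i := \eta(v^i)$, the tautological 1-form sending $\alpha^j v_j \mapsto \alpha^i$; then $d\vartheta^i = \eta(\tilde v^i)$ sends $\alpha^j v_j \mapsto d\alpha^i$. For injectivity, evaluate on $M = \mathbb{R}^{2n}$ with coordinates $(x^i, y^i)$ and the $V$-valued 1-form $\alpha^i = x^i\, dy^i$, so that $d\alpha^i = dx^i \wedge dy^i$. Distinct monomials in the $\alpha^i$ and $d\alpha^j$ then produce distinct products of the $dx^i$'s and $dy^j$'s and are linearly independent in $\Omega_M$, so distinct basis elements of $\Kos(V^*)$ map to linearly independent elements of $\Omega_{\Omega^1 \otimes V}$.

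For surjectivity, a natural $k$-form $\omega$ on $\Omega^1 \otimes V$ assigns to each $V$-valued 1-form $\alpha$ on every test $M$ a $k$-form $\omega(\alpha)$ on $M$, compatibly with pullback. A Peetre-type finiteness argument plus naturality under arbitrary smooth reparametrizations of $M$ force $\omega(\alpha)$ to be pointwise polynomial in a finite jet of $\alpha$; $\mathrm{GL}(V)$-equivariance combined with the absence of any canonical connection on $\Omega^1 \otimes V$ further restricts the dependence to $\alpha$ and $d\alpha$ alone. These multilinear operations are antisymmetric in the $\alpha^i$ (as 1-forms anticommute) and symmetric in the $d\alpha^j$ (as 2-forms commute), matching exactly the basis of $\Lambda^\bullet V^* \otimes \Sym^\bullet V^* = \Kos(V^*)$.

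The main obstacle is rigorously justifying the natural-operations classification in the surjectivity step. A cleaner alternative sidesteps Peetre's theorem via the product decomposition $\Omega^1 \otimes V \cong (\Omega^1)^{\dim V}$: first handle $V = \mathbb{R}$, showing directly that $\Omega_{\Omega^1}$ is generated by $\vartheta$ and $d\vartheta$; then invoke a K\"unneth-type identification $\Omega_{\mathcal F \times \mathcal G} \cong \Omega_{\mathcal F} \otimes \Omega_{\mathcal G}$ for these particular sheaves, matching the tensor-product splitting $\Kos(V^* \oplus W^*) \cong \Kos(V^*) \otimes \Kos(W^*)$.
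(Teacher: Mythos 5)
The paper states this theorem with a citation to \cite{BNablaG} and does not reproduce a proof, so your attempt must stand on its own. Your injectivity argument as written has a concrete gap: with the test datum $\alpha^i = x^i\,dy^i$ on $\mathbb{R}^{2n}$ one gets $d\alpha^i = dx^i\wedge dy^i$, and then any monomial in $\Kos(V^*) = \Lambda V^*\otimes \Sym V^*$ carrying a repeated symmetric generator --- e.g.\ $(\tilde v^1)^2 \in \Sym^2 V^*$ --- is sent by $\eta$ to $(d\alpha^1)^2 = (dx^1\wedge dy^1)^2 = 0$ at this test. So this single evaluation does not separate the symmetric part of the Koszul algebra, and the assertion that distinct basis elements produce linearly independent forms under it is false. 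One can repair this by taking $\alpha^i = \sum_{k=1}^N x^{ik}\,dy^{ik}$ for $N$ large so that $(d\alpha^i)^N \neq 0$, and noting that for any fixed nonzero element of $\Kos(V^*)$ some such test detects it; but the specific datum you chose cannot do the job, and the repair is a real change, not a cosmetic one.

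For surjectivity you correctly identify the natural-operations classification as the crux, but two of the principles you invoke do not apply. There is no $GL(V)$-equivariance constraint in this problem: one is computing \emph{all} natural transformations $\Omega^1\otimes V \Rightarrow \Omega^k$, not the $GL(V)$-equivariant ones, so $GL(V)$-equivariance cannot be used to cut the answer down. And ``absence of a canonical connection'' is not something one can cite as a principle; what forces the dependence to run only through $\alpha$ and $d\alpha$ is naturality under arbitrary diffeomorphisms of the test manifold, which constrains to pointwise tensorial expressions, and $d\alpha$ is the only diffeomorphism-natural first-order tensorial operation on a $1$-form. Your K\"unneth fallback --- reduce to $V=\mathbb{R}$, then tensor via $\Omega_{\mathcal F\times\mathcal G}\simeq\Omega_{\mathcal F}\otimes\Omega_{\mathcal G}$ --- is in fact closer in structure to the Freed--Hopkins argument, but that K\"unneth identification is itself a nontrivial theorem for non-representable sheaves and is part of what \cite{BNablaG} establishes (compare their Corollary 8.24, quoted in this paper as Proposition \ref{tensor}), so assuming it here is circular unless you supply an independent proof.
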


\begin{corollary}
The isomorphism (\ref{kos iso}) extends to an isomorphism of completions
\begin{equation}\label{compl.iso}
\overline \Omega_{\Omega^1 \otimes V} \simeq \overline \Kos(V^*).
\end{equation}
\end{corollary}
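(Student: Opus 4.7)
The plan is to show that $\eta$ is strict with respect to the filtrations defining the two completions, so that it descends to an isomorphism of inverse limits.

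First I would verify that $\eta$ is a \emph{graded} isomorphism when $\Kos(V^*)$ carries the total grading placing $\Lambda^1 V^*$ in degree $1$ and $\Sym^1 V^*$ in degree $2$, and $\Omega_{\Omega^1 \otimes V}$ carries the usual de Rham grading. On the generators $\ell \in \Lambda^1 V^*$ the formula (\ref{kos iso 2}) gives $\eta(\ell) \in \Omega^1_{\Omega^1 \otimes V}$; for $\tilde \ell = d_K \ell \in \Sym^1 V^*$, compatibility with differentials yields $\eta(\tilde \ell) = d\,\eta(\ell) \in \Omega^2_{\Omega^1 \otimes V}$. Extending multiplicatively as a morphism of DGAs, $\eta$ sends elements of total Koszul degree $n$ to $n$-forms, and the preceding theorem ensures each such graded piece is an isomorphism.

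Next I would observe that $\Kos^{\geq k}(V^*)$ and $\Omega^{\geq k}_{\Omega^1 \otimes V}$ are homogeneous ideals in their respective gradings, so the graded isomorphism $\eta$ restricts to isomorphisms $\Kos^{\geq k}(V^*) \xrightarrow{\sim} \Omega^{\geq k}_{\Omega^1 \otimes V}$ for every $k \geq 0$. Passing to quotients gives compatible isomorphisms at each stage of the two inverse systems, and the induced map on limits
\begin{equation}
\overline{\Kos}(V^*) = \lim_{\law}\Kos(V^*)/\Kos^{\geq k}(V^*) \xrightarrow{\sim} \lim_{\law} \Omega_{\Omega^1 \otimes V}/\Omega^{\geq k}_{\Omega^1 \otimes V} = \overline \Omega_{\Omega^1 \otimes V}
\end{equation}
is the desired isomorphism.

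The only point requiring attention is using the correct ``total'' grading on the Koszul complex (in which $\Sym^1 V^*$ sits in degree $2$) rather than the obvious bigrading, so that $\eta$ becomes graded with respect to de Rham degree. Once this convention is in place, the argument is a routine application of the fact that a strict morphism of filtered algebras induces an isomorphism on completions whenever each associated graded piece is an isomorphism.
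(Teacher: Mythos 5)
The paper states this corollary without proof, treating it as an immediate consequence of the fact that $\eta$ is degree-preserving; your argument makes that implicit step explicit and is correct. The observation that $\eta$ is a \emph{graded} isomorphism (with $\Sym^1 V^*$ in degree $2$), hence preserves the degree filtrations and passes to the inverse limits, is exactly the routine verification the paper elides.
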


Let $E_\nabla G$ be the simplicial sheaf of principal $G$ bundles with connection and trivialization. We remark that pulling down the connection form along the trivializing section gives a weak equivalence 
\begin{equation}\label{w.e.E_nablaG}
E_\nabla G \simeq \Omega^1 \otimes \g.
\end{equation}
Combining (\ref{compl.iso}) and (\ref{w.e.E_nablaG}) gives
\begin{corollary}
There is an identification
\begin{equation}
\overline \Omega_{E_\nabla G} \simeq \overline \Kos(\g^\ast)
\end{equation}
where $\overline \Kos(\g^\ast) := \Lambda \g^\ast \otimes \overline \Sym(\g^\ast)$ is the completed Weil algebra.
\end{corollary}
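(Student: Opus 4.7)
The plan is to chain the two results that immediately precede the statement. Specializing the previous corollary to the case $V = \g$ gives an isomorphism
\[
\overline\Omega_{\Omega^1 \otimes \g} \simeq \overline\Kos(\g^\ast) = \Lambda\g^\ast \otimes \overline\Sym(\g^\ast).
\]
Next I would pull back differential forms along the weak equivalence $E_\nabla G \simeq \Omega^1 \otimes \g$ of (\ref{w.e.E_nablaG}) — recall this is induced by evaluating the universal connection 1-form on the trivializing section — to obtain an isomorphism $\overline\Omega_{E_\nabla G} \simeq \overline\Omega_{\Omega^1 \otimes \g}$. Composing these two isomorphisms yields the claim.

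The only substantive point, and the one I would isolate as the main obstacle, is justifying that this weak equivalence of simplicial sheaves really does induce an isomorphism on completed de Rham complexes. This is not automatic for a general weak equivalence, since $\Omega_X$ is defined as a mapping space into $\Omega$ and one needs $\Omega^k$ to be sufficiently fibrant. However, this is exactly the content of a theorem in \cite{BNablaG}: the functor $X \mapsto \Omega_X$ takes weak equivalences of simplicial sheaves on the site of manifolds to isomorphisms of differential graded algebras. I would quote this directly and then observe that homotopy invariance passes to the completion $\overline\Omega$ because each truncation $\Omega/\Omega^{\geq k}$ is obtained from the invariant functor $\Omega$ by the quotient by an ideal generated by forms of bounded degree, and the inverse limit defining $\overline\Omega$ commutes with the isomorphisms so obtained degree by degree.

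Finally, I would note that no new calculation is required to identify the right-hand side with the completed Weil algebra: the algebra generators of $\Kos(\g^\ast)$ are precisely $\g^\ast$ in Koszul degree one (corresponding to the connection form) together with $\g^\ast$ in degree two (corresponding to its curvature), and passing to the completion formally in the symmetric part produces $\Lambda\g^\ast \otimes \overline\Sym(\g^\ast)$, which is by definition the completed Weil algebra. So once homotopy invariance of $\overline\Omega$ is in hand, the proof is a two-line composition.
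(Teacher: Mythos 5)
Your proof is correct and is exactly the paper's argument: the text introduces the corollary with ``Combining (\ref{compl.iso}) and (\ref{w.e.E_nablaG}) gives,'' i.e.\ specialize the completed Koszul isomorphism to $V = \g$ and transport it across the weak equivalence $E_\nabla G \simeq \Omega^1 \otimes \g$. You spell out the one nontrivial ingredient the paper leaves implicit (homotopy invariance of $\Omega$, hence of $\overline\Omega$, under weak equivalences of simplicial sheaves, which is a result of \cite{BNablaG}), but the route is the same.
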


Similarly, for a smooth manifold $Y$ we have

\begin{proposition}[\cite{BNablaG} Corollary 8.24]\label{tensor}
There is an isomorphism
\begin{equation}
\Omega_{E_\nabla G \times Y} \simeq \Kos(\g^*) \otimes \Omega_Y
\end{equation}
which extends to an isomorphism of completions
\begin{equation}
\overline \Omega_{E_\nabla G \times Y} \simeq \overline \Kos(\g^*) \otimes \Omega_Y
\end{equation}
\end{proposition}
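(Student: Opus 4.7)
The plan is to bootstrap from the identification $\Omega_{\Omega^1 \otimes \g} \simeq \Kos(\g^\ast)$ already in hand, using the weak equivalence $E_\nabla G \simeq \Omega^1 \otimes \g$ of (\ref{w.e.E_nablaG}). Product with the smooth manifold $Y$ respects weak equivalences, so passing to de Rham complexes gives $\Omega_{E_\nabla G \times Y} \simeq \Omega_{(\Omega^1 \otimes \g) \times Y}$, and it suffices to produce an isomorphism $\Omega_{(\Omega^1 \otimes \g) \times Y} \simeq \Kos(\g^\ast) \otimes \Omega_Y$.

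Let $\pi_1$ and $\pi_2$ denote the two projections from $(\Omega^1 \otimes \g) \times Y$ onto its factors. Define
\begin{equation*}
\Psi : \Kos(\g^\ast) \otimes \Omega_Y \raw \Omega_{(\Omega^1 \otimes \g) \times Y}, \qquad \kappa \otimes \omega \mapsto \pi_1^\ast \eta(\kappa) \wedge \pi_2^\ast \omega,
\end{equation*}
and extend multiplicatively, where $\eta$ is the isomorphism (\ref{kos iso}) from the previous theorem. Compatibility with differentials is inherited from the dga property of $\eta$ together with the fact that pullback commutes with $d$ and is multiplicative, so $\Psi$ is a morphism of dgas.

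The main work is verifying that $\Psi$ is bijective on sections over each test manifold $M$. By Yoneda a $k$-form on $(\Omega^1 \otimes \g) \times Y$ is a natural assignment, to each pair $(\alpha, f)$ with $\alpha \in \Omega^1_M \otimes \g$ and $f \in C^\infty(M, Y)$, of a form in $\Omega^k_M$. I would argue by separation of variables: hold $f$ fixed and vary $\alpha$ to apply the structural input from Theorem 8.1, which forces the $\alpha$-dependence to be polynomial in the pairings $\langle \ell, \alpha\rangle$ and their exterior derivatives with $f$-dependent coefficients; then vary $f$ through the universal family over $Y$ to identify those coefficients with pullbacks of ordinary forms on $Y$. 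The hardest step is making this separation argument rigorous — in particular, showing that the coefficients of the $\eta$-monomials in an arbitrary natural form really are pullbacks of forms from $Y$ rather than more general natural gadgets depending on both $\alpha$ and $f$.

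For the completion statement, $\Psi$ carries the filtration $\Kos^{\geq k}(\g^\ast) \otimes \Omega_Y$ into $\Omega^{\geq k}_{(\Omega^1 \otimes \g) \times Y}$, since $\eta(\Kos^{\geq k}(\g^\ast)) \subset \Omega^{\geq k}_{\Omega^1 \otimes \g}$ by construction of $\eta$ on generators. Passing to the inverse limit of these filtrations then extends the uncompleted isomorphism to $\overline{\Omega}_{E_\nabla G \times Y} \simeq \overline{\Kos}(\g^\ast) \otimes \Omega_Y$; since $Y$ is finite dimensional, no completion on the $\Omega_Y$ factor is required.
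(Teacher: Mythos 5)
The paper does not give its own proof of this statement: it is quoted verbatim from \cite{BNablaG}, Corollary 8.24, so there is nothing in-text to compare against. Judged on its own terms, your setup is sound. Replacing $E_\nabla G$ by $\Omega^1 \otimes \g$ is legitimate, the candidate map $\Psi$ is the right one and is visibly a morphism of differential graded algebras, and the handling of the completion at the end is correct: since $\Omega_Y$ is bounded in degree, only the Koszul factor needs completing, and a degree-preserving isomorphism of the uncompleted objects passes to the inverse limits.

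The genuine gap is the one you flag yourself: bijectivity of $\Psi$ is the entire mathematical content of the proposition, and the ``separation of variables'' step is only a sketch. Naturality in the test manifold $M$ does not by itself show that an arbitrary natural assignment $(\alpha, f) \mapsto \omega(\alpha, f) \in \Omega^k_M$ splits as a finite sum $\sum_i \eta(\kappa_i)(\alpha) \wedge f^\ast \omega_i$. One has to argue both that fixing $f$ produces $\Kos(\g^\ast)$-coefficients which, as $f$ varies, assemble into honest smooth forms on $Y$, and that only finitely many Koszul monomials occur uniformly across all test objects. The analogous K\"unneth statement for a product of two honest manifolds fails without a topological completion of the tensor product, so something special about the representable sheaf $\Omega^1\otimes\g$ --- presumably the finite-type polynomial nature of $\Kos(\g^\ast)$ --- must be invoked, and your outline does not say where it enters. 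Until that separation step is carried out, by the kind of analysis of natural forms developed in \cite{BNablaG}, this remains a plausible strategy rather than a proof.
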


We recall that the groupoid sheaf of principal $G$ bundles with connection is weakly equivalent to the simplicial sheaf
\begin{equation}\label{BnablaGaction}
\xymatrix{ B_\nabla G \hspace{.2cm} \simeq \hspace{.2cm}\ \big(E_\nabla G &  E_\nabla G \times G \ar@<.6ex>[l] \ar@<-.6ex>[l] &  \cdots \big) \ar@<1ex>[l] \ar@<0ex>[l] \ar@<-1ex>[l]}
\end{equation}
where the 1-simplices are determined by the $G$-action on $E_\nabla G$ and the higher simplices by group composition.

For a $G$-space $Y$, one can use (\ref{BnablaGaction}) to define the differential Borel quotient as the simplicial sheaf
\begin{equation}
\xymatrix{ (Y_G)_\nabla \hspace{.2cm} := \hspace{.2cm}\ \big(E_\nabla G \times Y &  E_\nabla G \times Y \times G \ar@<.6ex>[l] \ar@<-.6ex>[l] &  \cdots\big) \ar@<1ex>[l] \ar@<0ex>[l] \ar@<-1ex>[l]}
\end{equation}
constructed using the $G$ action on $E_\nabla G \times Y$.

The action of $G$ on $E_\nabla G$ in (\ref{BnablaGaction}) induces two derivations on the de Rham complex $\Omega_{E_\nabla G}$. The \emph{contraction} (\cite{BNablaG} Lemma 7.25) in the direction $\xi \in \g$ is a degree (-1) derivation 
on $\Omega_{E_\nabla G} \simeq \Kos(\g^*)$ which extends to a derivation on $\overline \Omega_{E_\nabla G} \simeq \overline \Kos(\g^*)$.
If we choose bases $\theta^a \in \Lambda^1\g^*$ and $z^a := d\theta^a \in \Sym^1(\g^*)$ dual to $e_a \in \g$, we can express this as
\begin{equation}
\iota_{e_b}\theta^a := \delta^a_b,  \hspace{.5cm} \iota_{e_b}z^a := -f^a_{bc}\theta^c
\end{equation}
where $f^a_{bc}:= \langle e^a, [e_b,e_c]\rangle$.

The \emph{Lie derivative} in the direction $\xi \in \g$ is a degree 0 derivation $\mathcal L_{\xi}$ on $\Kos(\g^*)$ which extends to $\overline \Kos(\g^*)$ and is induced from the coadjoint action of $G$ on $\g^\ast$. It is given by
\begin{equation}
\mathcal L_{e_b}\theta^a := -f^a_{bc}\theta^c,  \hspace{.5cm} \mathcal L_{e_b}z^a := -f^a_{bc}z^c.
\end{equation}
These satisfy Cartan's relation
\begin{equation}
\mathcal L_\xi = d\iota_\xi + \iota_\xi d.
\end{equation}

Similarly, there are two derivations $\iota, \mathcal L$ on the completed de Rham complex of $\overline \Omega_{E_\nabla G  \times Y} \simeq \Kos(\g^*)\otimes \Omega_Y$. They act on $\overline \Kos(\g^*)$ by the derivations given above and by contraction and Lie derivative on $\Omega_Y$.
Finally define the subcomplexes \emph{basic forms}
\begin{equation}
(\overline \Omega_{E_\nabla G})_{\bas} \subset \overline \Omega_{E_\nabla G}
\end{equation} 
\begin{equation}
(\overline \Omega_{E_\nabla G \times Y})_{\bas} \subset \overline \Omega_{E_\nabla G \times Y}
\end{equation}
to be those elements annilhilated by $\mathcal L$ and $\iota$. 

\begin{theorem}[\cite{BNablaG} Theorem 7.20, 7.28]
There are identifications
\begin{equation}
\Omega_{B_\nabla G} \simeq (\Omega_{E_\nabla G})_{\bas} \simeq ( \Kos(\g^*))_{\bas} \simeq \Sym(\g^*)^G
\end{equation}
\begin{equation}
\Omega_{(Y_G)_\nabla} \simeq (\Kos(\g^*) \otimes \Omega_Y)_{\bas}
\end{equation}
and they extend to identifications of their respective completions.
\end{theorem}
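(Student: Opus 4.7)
The plan is to prove the identifications from right to left, starting with the purely algebraic computation and lifting it to the simplicial sheaf statements.

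First, I would verify $(\Kos(\g^*))_{\bas} \simeq \Sym(\g^*)^G$ by direct calculation. Writing a general element of $\Kos(\g^*) = \Lambda\g^* \otimes \Sym(\g^*)$ as a sum of monomials in the $\theta^a$ and $z^a$, the condition $\iota_{e_b}\alpha = 0$ for all $b$ forces the $\theta^a$-degree to vanish, so $\alpha \in \Sym(\g^*)$. Restricted to $\Sym(\g^*)$, the derivation $\mathcal L_{e_b}$ acts via the coadjoint action of $\g$ on $\g^*$ (extended as a derivation of the symmetric algebra), and its common kernel is exactly the $G$-invariant polynomials $\Sym(\g^*)^G$.

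Next, I would use the isomorphism $\eta: \Kos(\g^*)\xrightarrow{\sim} \Omega_{E_\nabla G}$ coming from (\ref{w.e.E_nablaG}) and Theorem 8.1. The derivations $\iota_\xi$ and $\mathcal L_\xi$ on the two sides are both induced by the $G$-action on $E_\nabla G$ (equivalently, by gauge transformation of the universal connection form on $\Omega^1 \otimes \g$), so $\eta$ intertwines them; this can be checked on the algebra generators $\theta^a, z^a$ using the explicit formulas stated above. Consequently $\eta$ restricts to an isomorphism $(\Kos(\g^*))_{\bas} \simeq (\Omega_{E_\nabla G})_{\bas}$.

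The main identification $\Omega_{B_\nabla G} \simeq (\Omega_{E_\nabla G})_{\bas}$ then follows from the groupoid presentation (\ref{BnablaGaction}). Forms on a simplicial sheaf are defined by compatibility with the face maps, so an element of $\Omega_{B_\nabla G}$ is an $\alpha \in \Omega_{E_\nabla G}$ satisfying $\text{pr}^*\alpha = a^*\alpha$, where $\text{pr}, a: E_\nabla G \times G \raw E_\nabla G$ are the projection and the action map. Differentiating this equation at the identity of $G$ along $e_a$ extracts both horizontality $\iota_{e_a}\alpha = 0$ and infinitesimal invariance $\mathcal L_{e_a}\alpha = 0$; conversely, the classical Cartan-theoretic argument for free actions shows that any basic form descends. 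The parametrized statement $\Omega_{(Y_G)_\nabla} \simeq (\Kos(\g^*) \otimes \Omega_Y)_{\bas}$ follows by the same argument after tensoring with $\Omega_Y$, invoking Proposition \ref{tensor} to identify $\Omega_{E_\nabla G \times Y}$ with $\Kos(\g^*)\otimes\Omega_Y$. Finally, since each step respects the filtration by form degree (the derivations and pullbacks preserve it), taking the inverse limit yields the corresponding identifications on the completions.

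The main obstacle will be the groupoid step: for an ordinary principal $G$-bundle the equivalence between descent to the quotient and basicness is textbook material, but $E_\nabla G$ is a simplicial sheaf rather than a manifold, so the argument has to be carried out through the explicit model $\Omega^1 \otimes \g$. Concretely, one must identify the infinitesimal $G$-action on $\Omega_{E_\nabla G}$ with the intrinsic derivations $\iota_\xi, \mathcal L_\xi$ on $\Kos(\g^*)$ defined via the coadjoint action, and verify that the appropriate freeness (or homotopy-freeness) of the $G$-action on $E_\nabla G$ suffices to run the Cartan-type descent argument in this generalized setting.
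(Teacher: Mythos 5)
The paper itself gives no proof of this statement; it is cited verbatim from \cite{BNablaG} (Theorems 7.20 and 7.28), so there is no in-paper argument to compare against. Your overall plan—compute the basic subalgebra of the Weil algebra, transport it along $\eta$, and reduce $\Omega_{B_\nabla G}$ to a basicness condition via the simplicial presentation—does track the structure of the argument in \cite{BNablaG}, and the descent step (reading off horizontality from the $dg$-dependent part and invariance from the $dg$-free part of $\mathrm{pr}^*\alpha = a^*\alpha$ on $E_\nabla G \times G$) is exactly the right mechanism in the simplicial-sheaf setting.

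However, your opening algebraic computation contains a genuine error. You claim that, writing an element of $\Kos(\g^*)$ in the generators $\theta^a, z^a$, horizontality $\iota_{e_b}\alpha=0$ forces the $\theta$-degree to vanish and hence $\alpha\in\Sym(\g^*)$. This is false for nonabelian $\g$: the paper's formula $\iota_{e_b}z^a = -f^a_{bc}\theta^c$ shows that $z^a$ itself is not horizontal, so a polynomial purely in the $z^a$ need not be horizontal, and conversely horizontal elements generally do contain $\theta$'s (e.g.\ the Chern–Weil generators $\chi^a = z^a + \tfrac12 f^a_{bc}\theta^b\theta^c$ from (\ref{chi}), which satisfy $\iota_{e_b}\chi^a=0$). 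The correct statement is that after the change of polynomial generators $z^a \leadsto \chi^a$ (the Kalkman/Mathai–Quillen trick), one has $\Kos(\g^*) \simeq \Lambda[\theta^a]\otimes\mathbb{R}[\chi^a]$ with $\iota_{e_b}\theta^a=\delta^a_b$ and $\iota_{e_b}\chi^a=0$; \emph{then} horizontality is equivalent to $\theta$-degree zero, identifying the horizontal subalgebra with $\Sym(\g^*)$ via $\chi^a$. Since $\mathcal L_{e_b}\chi^a = -f^a_{bc}\chi^c$, taking the further kernel of $\mathcal L$ gives $\Sym(\g^*)^\g$, and one appeals to connectedness of $G$ to upgrade this to $\Sym(\g^*)^G$. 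Without passing to the $\chi^a$ generators, your identification of the horizontal subalgebra simply does not hold, and the rest of the chain of isomorphisms would be dangling.
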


Let $r: Q \raw M$ be a principal $G$-bundle with connection $\Phi \in \Omega^1_Q(\g)$ and curvature $\omega \in \Omega^2_Q(\g)$. We express $\Phi = \phi^a e_a$ and $\omega = \omega^a e_a$ where $\phi^a, \omega^a$ are $G$-invariant forms on $Q$. Let 
\begin{equation}\label{classifying map}
f: M \raw B_\nabla G
\end{equation} be the classifying map which induces the pullback

\begin{equation}
\begin{tikzcd}
Q \arrow[r, "\tilde f"] \arrow[d] & E_\nabla G \arrow[d] \\
M \arrow[r, "f"] & B_\nabla G
\end{tikzcd}
\end{equation}
where $\tilde f$ corresponds to the form $\Phi$ under the identification $E_\nabla G(Q) \simeq \Omega^1_Q(\g)$.

Using the map (\ref{kos iso 2}) we see that 
\begin{equation}
Q \xrightarrow{\tilde f} E_\nabla G \xrightarrow{\eta(\theta^b)} \Omega
\end{equation}
is the element of $\Omega_Q$ given by
$\langle \theta^b, e_a\rangle \phi^a = \phi^b$.  Pulling back forms along $\tilde f$ gives the Chern Weil map 
\begin{equation}
\begin{split}
   \tilde f^\ast: \overline \Kos(\g^*) &\raw \Omega_Q\\
   \theta^a &\mapsto \phi^a.
\end{split}
\end{equation}

The map $\tilde f^\ast$ restricts to a morphism of differential graded algebras on $\Kos(\g^\ast) \subset \overline \Kos(\g^\ast)$ and this defines the rest of the map. In particular, if we set 
\begin{equation}\label{chi}
\chi^a := z^a + \frac{1}{2}f^a_{bc}\theta^b\theta^c 
\end{equation}
in degree 2 of $\Kos(\g^\ast) \subset \overline \Kos(\g^\ast)$, we see that
\begin{equation}
\tilde f^\ast : \chi^a \mapsto \omega^a.
\end{equation}

We also note that
\begin{equation}
\tilde f \times id: Q \times Y \raw E_\nabla G \times Y
\end{equation}
is $G$-equivariant and descends to a map
\begin{equation}
f_Y : Y_Q \raw (Y_G)_\nabla
\end{equation}
which induces a map on the level of forms
\begin{equation}\label{pullback}
\begin{split}
   f_Y^\ast : (\overline \Kos(\g^*) \otimes \Omega_Y)_{\bas} &\raw (\Omega_{Q\times Y})_{\bas}\\
   \theta^a \otimes \alpha &\mapsto \phi^a\wedge\alpha\\
   \chi^a \otimes \alpha &\mapsto \omega^a\wedge\alpha.
\end{split}
\end{equation}

Finally, let $E \raw Y$ be a vector bundle on $Y$ with a $G$ action. Then (\ref{pullback}) extends to a map on forms valued in sections of $E$
\begin{equation}\label{pullback sections}
   f_Y^\ast: (\overline \Kos(\g^*)\otimes \Omega_Y(E))_{\bas} \raw \Omega_{Y_Q}(E_Q)
\end{equation}
by pulling back sections along $p: Q\times Y \raw Y$.

\subsubsection{Covariant Derivative on Generalized Manifolds} In this section we construct a covariant derivative on the differential Borel quotient of a vector bundle. As before, let $E \raw X$ be a complex vector bundle with a $G$-action and a $G$-equivariant covariant derivative $\nabla$. The map $E_\nabla G \times E \xrightarrow{id\times \pi} E_\nabla G \times X$ is $G$-equivariant and descends to a map of Borel quotients
\begin{equation}\label{pi_G}
   \pi_G: (E_G)_\nabla \raw (X_G)_\nabla.
\end{equation}
We will denote the bundle $E_\nabla G \times E$ by $\underline E$ and we observe that it can be written as a pullback of simplicial sheaves
\begin{equation}
\begin{tikzcd}
\underline E \arrow[r]\arrow[d] & (E_G)_\nabla \arrow[d, "\pi_G"]\\
E_\nabla G \times X \arrow[r]  & (X_G)_\nabla
\end{tikzcd}
\end{equation}
We define forms on $(X_G)_\nabla$ valued in sections of $(E_G)_\nabla$ by
\begin{equation}\label{id}
\begin{split}
\overline \Omega_{(X_G)_\nabla}((E_G)_\nabla) 
   &:= \overline \Omega_{E_\nabla G \times X}(\underline E)_{\bas}\\ 
   &\simeq (\overline \Omega_{E_\nabla G} \otimes \Omega_X(E))_{\bas}\\
   &\simeq (\overline \Kos(\g^\ast) \otimes \Omega_X(E))_{\bas}.
\end{split}
\end{equation}
where we have used \ref{tensor} in the second line and the grading is given by the total degree.

There is a covariant derivative on $\Omega_{(X_G)_\nabla}((E_G)_\nabla)$ over the algebra $\Omega_{(X_G)_\nabla}$ given by
\begin{equation}\label{G covariant derivative}
\nabla_G := d_K + \nabla - \theta^a\tau_a.
\end{equation}
Its curvature
\begin{equation}
\Omega_G := \nabla_G^2
\end{equation}
is a basic endomorphism valued 2-form, i.e. $F_G \in (\overline\Kos(\g^\ast)\otimes \Omega_X(\End(E)))_{\bas}$. 

\begin{lemma}\label{pullback equivariant curvature}
The equivariant curvature satisfies
\begin{equation}
f_X^\ast \Omega_G = \Omega_Q
\end{equation}
where $\Omega_Q$ is (\ref{Q curvature}) and $f_X^*$ is (\ref{pullback sections}) applied to the bundle $\End(E) \raw X$.
\end{lemma}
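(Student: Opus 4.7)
\emph{Proof plan.}
The plan is to show that the Chern Weil pullback $f_X^\ast$ intertwines the two covariant derivatives, i.e.\
\[
f_X^\ast \circ \nabla_G \;=\; \nabla_Q \circ f_X^\ast
\]
as operators $\overline\Omega_{(X_G)_\nabla}((E_G)_\nabla) \raw \Omega_{Q\times X}(\underline E)_{\bas}$, where $\nabla_Q = \nabla - \phi^a\tau_a$ is the derivation supplied by Lemma~\ref{Q covariant derivative}. Squaring this intertwining and using that $\nabla_G^2$ and $\nabla_Q^2$ act as multiplication by their respective curvature $2$-forms then immediately gives $f_X^\ast\Omega_G = \nabla_Q^2 = \Omega_Q$, where the last equality is Lemma~\ref{Q curvature identification}.

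To verify the intertwining I decompose $\nabla_G = d_K + \nabla - \theta^a\tau_a$ and analyse each summand under $f_X^\ast$. The piece $\nabla$ becomes the pulled-back connection $p^\ast\nabla$ on $\underline E$, while $-\theta^a\tau_a$ becomes $-\phi^a\tau_a$ via the pullback rule~(\ref{pullback sections}) together with the fact that the endomorphism $\tau_a = \mathcal L_{\xi_a} - \nabla_{\xi_a}$ is defined by the same formula on $E$ and on $\underline E$. The essential content is the chain-map property $f_X^\ast\circ d_K = d\circ f_X^\ast$ on $\overline\Kos(\g^\ast)$. On the generator $\theta^a$ one computes
\[
f_X^\ast(d_K\theta^a) = f_X^\ast(z^a) = f_X^\ast\bigl(\chi^a - \tfrac{1}{2}f^a_{bc}\theta^b\theta^c\bigr) = \omega^a - \tfrac{1}{2}f^a_{bc}\phi^b\phi^c = d\phi^a = d(f_X^\ast\theta^a),
\]
which is precisely the structure equation for $\Phi$. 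On $\chi^a$ the identity reduces to the Bianchi identity $d\omega^a = f^a_{bc}\omega^b\phi^c$. Since $f_X^\ast$ is a DGA morphism on $\Kos(\g^\ast)$ this compatibility extends to the completion.

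Putting the three pieces together on a decomposable element $\alpha\otimes\beta\in(\overline\Kos(\g^\ast)\otimes\Omega_X(E))_{\bas}$ and tracking the Koszul sign $(-1)^{|\alpha|}$ that arises both when $\nabla$ passes through $\alpha$ and when $\phi^a$ passes through $f_X^\ast\alpha$ yields
\[
f_X^\ast\nabla_G(\alpha\otimes\beta) = d(f_X^\ast\alpha)\wedge p^\ast\beta + (-1)^{|\alpha|}f_X^\ast\alpha\wedge p^\ast\nabla\beta - \phi^a\,f_X^\ast\alpha\wedge p^\ast(\tau_a\beta) = \nabla_Q f_X^\ast(\alpha\otimes\beta),
\]
which is the desired intertwining. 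The main obstacle is the chain-map property for $d_K$, which encodes both the structure equation of $\Phi$ and the Bianchi identity for $\omega$; everything else is routine sign bookkeeping. I note that this strategy avoids a direct expansion of $\nabla_G^2$ in the completed Weil algebra, which would otherwise require matching the cross-term $\tfrac{1}{2}\theta^a\theta^b[\tau_a,\tau_b]$ against $\tfrac{1}{2}\phi^a\phi^b\iota_b\iota_a\Omega$ through a separate computation using the structure equations for $\Theta$.
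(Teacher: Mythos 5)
Your proof is correct and follows essentially the same route as the paper's: both establish the intertwining $f_X^\ast\circ\nabla_G = \nabla_Q\circ f_X^\ast$ (the commuting square (\ref{universal pullback1})), square it, reduce to generators $p^\ast\sigma$, and invoke Lemma~\ref{Q curvature identification}. Where the paper compresses the verification of that square to ``applying definitions,'' you usefully unpack it by isolating the chain-map identity $f_X^\ast\circ d_K = d\circ f_X^\ast$ on $\overline\Kos(\g^\ast)$, which on generators is exactly the structure equation $d\phi^a = \omega^a - \frac{1}{2}f^a_{bc}\phi^b\phi^c$ and the Bianchi identity for $\omega$.
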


\begin{proof}
Let $\bar\nabla_Q: \Omega_{Q\times X}(\underline E) \raw \Omega_{Q\times X}(\underline E)$ be the extension of $\nabla_Q: \Omega_{Q\times X}(\underline E)_{\basic} \raw \Omega_{Q\times X}(\underline E)_{\basic}$  using  formula (\ref{G covariant derivative}). Similarly, let $\bar \nabla_G$ be the extension of $\nabla_G$ to $\overline \Kos(\g^*) \otimes \Omega_X(\End(E))$ to all basic and nonbasic forms. Applying definitions shows that

\begin{equation}\label{universal pullback1}
\begin{tikzcd}
   \overline \Kos(\g^\ast) \otimes \Omega_X(E) \arrow[r, "\bar \nabla_G"] \arrow[d, "f_X^\ast"] & \overline \Kos(\g^\ast) \otimes \Omega_X(E) \arrow[d, "f_X^\ast"]\\
   \Omega_{Q \times X}(\underline E) \arrow[r, "\bar \nabla_Q"] & \Omega_{Q\times X}(\underline E)
\end{tikzcd}
\end{equation}
commutes.
which implies that
\begin{equation}
\begin{tikzcd}
   \Omega^0_X(E) \arrow[r, "\bar\nabla_G^2"] \arrow[d, "\tilde f_X^\ast = p^\ast"] & \overline \Kos(\g^\ast) \otimes \Omega_X(E) \arrow[d, "\tilde f_X^*"]\\
   \Omega^0_{Q\times X}(\underline E) \arrow[r, "\bar\nabla_Q^2"] & \Omega_{Q\times X}(\underline E)
\end{tikzcd}
\end{equation}
commutes.

Since sections of $\underline E \simeq p^\ast E$ are generated over the ring of functions $\Omega^0_{Q\times X}$ by $p^\ast\sigma$ for $\sigma \in \Omega^0_X(E)$, we see that $\bar\nabla_Q^2 = \tilde f_X^\ast \bar\nabla_G^2$ which implies $\nabla_Q^2 = f_X^\ast \nabla_G^2$. Applying Lemma \ref{Q curvature identification} finishes the proof.
\end{proof}

\section{Review of The Families Index Theorem}\label{Section 3}

We now recall the statement of the families index theorem \cite{Bismut:1986uh}. This section is based on \cite{berline2003heat} to which we refer the reader for a more thorough exposition.

\subsubsection{The Bismut Superconnection}

\begin{definition}
A \emph{geometric family} is
   \begin{itemize}
      \item A smooth fiber bundle $X \xrightarrow{\pi} S$ with compact even dimensional fibers
      \item A horizontal distribution $H \subset TX$; i.e. $\pi$ induces an isomorphism on the fibers $\pi_\ast: H_x \xrightarrow{\sim} TS_{\pi(x)}$
      \item A metric $g^{X/S}$ on $T(X/S) := \ker(\pi_\ast)$.
   \end{itemize}
\end{definition}

Let $(X \xrightarrow{\pi} S, H, g^{X/S})$ be a geometric family. We construct a connection $\nabla^{X/S}$ on $T(X/S)$ as follows. 

\begin{construction}\label{vertical connection} 
Choose a metric $g^S$ on $TS$ and pull it back to a metric $\pi^\ast g^S$ on $H$. 
The metric $g^X := g^{X/S} \oplus \pi^\ast g^S$ induces a Levi-Civita connection $\nabla^{g^X}$ on $TX$. 
The horizontal distribution induces a split short exact sequence of bundles
\begin{equation}\label{split exact}
0 \raw H \mono TX \stackrel[\iota]{P}{\rightleftarrows} T(X/S) \raw 0.
\end{equation} 
Define $\nabla^{X/S} := P \circ \nabla^{g^X} \circ \iota$, i.e. the compression of the Levi-Civita connection to the vertical tangent space.
\end{construction}

\begin{remark}
The proof that this construction is independent of the choice of metric $g^S$ is straightforward and can be found for example in \cite{berline2003heat}.
\end{remark}

\begin{remark}
The curvature of $\nabla^{X/S}$
\begin{equation}\label{vertical curvature}
\Omega^{X/S} := (\nabla^{X/S})^2
\end{equation}
is a 2-form in  $\Omega_X(\so(T(X/S)))$.
\end{remark}

\begin{definition}
A \emph{geometric Clifford family} is
   \begin{itemize}
      \item A geometric family $(X \xrightarrow{\pi} S, H, g^{X/S})$
      \item A $\Z/2$-graded $\Cl(T(X/S), g^{X/S})$-module bundle $\mathscr E \raw X$
      \item A \emph{Clifford connection} $\nabla^{\mathscr E}$ on $\mathscr E$ satisfying the Leibniz rule for Clifford multiplication: for $v$ a section of $T(X/S) \subset \Cl(T(X/S))$ and $\sigma \in C^\infty(\mathscr E)$ 
      \begin{equation}\nabla^{\mathscr E} (v\cdot \sigma) = \nabla^{X/S}v \cdot \sigma + v\cdot \nabla^{\mathscr E} \sigma.\end{equation}
   \end{itemize}
\end{definition}

\begin{remark}
The $\Z/2$-grading $\E \simeq \E^+ \oplus \E^-$ is an eigen-decomposition for the action of the \emph{chirality operator}, $\Gamma \in C^\infty(\Cl(TX/S))$, defined locally as
\begin{equation}\label{chirality operator}
\Gamma := i^{n/2}e_1\cdots e_n
\end{equation}
where $(e_i)$ is a local orthonormal basis of $T(X/S)$. It satisfies $\Gamma^2 = 1$ and its $\pm 1$ eigenbundle is $\E^{\pm}$.
\end{remark}

Let $(\mathscr E \raw X \xrightarrow{\pi} S, H, g^{X/S}, \nabla^{\mathscr E})$ be a geometric Clifford family and let $\mathscr H \raw S$ be the $\Z/2$-graded infinite dimensional vector bundle whose fiber over $z \in S$ is the space of $C^\infty$ sections of $\mathscr E$ over $X_z := \pi^{-1}(z)$. We can identify its space of sections as $\Gamma(S;\mathscr H):= \C^\infty(X; \mathscr E)$ and we define forms valued in $\mathscr H$ to be
\begin{equation}
\Omega_S(\mathscr H) := C^\infty(X; \Lambda H^* \otimes \mathscr E)
\end{equation}
where we are using that $H$ is horizontal to identify $\pi^\ast \Lambda T^\ast S \simeq \Lambda H^*$. There is a $\Z/2$-grading on $\E$ and a $\Z$-grading on $\Lambda H^*$ and we use the total (mod 2) to endow $\Omega_S(\sH)$ with a $\Z/2$-grading.

A \emph{superconnection} on $\sH$ is a differential operator $\mathbb A$ on $\Omega_S(\sH)$ of odd degree such that
\begin{equation}
\mathbb A(\nu.\sigma) = d_S\nu \cdot \sigma + (-1)^{|\nu|}\nu \cdot \mathbb A \sigma
\end{equation}
where $\nu \in \Omega_S$ and $\sigma \in \Gamma(S; \sH)$.  There is a decomposition $\mathbb A = \sum_i \mathbb A_{[i]}$ where
\begin{equation}
   \mathbb A_{[i]}: \Omega^\bullet_S(\sH) \raw \Omega^{\bullet +i}_S(\sH)
\end{equation}
raises the total degree by $i$.

It follows from the definition that $\mathbb A_{[1]}$ is a connection on $\sH$. The other components $\mathbb A_{[i\neq 1]}$ supercommute with the action of the algebra of forms $\Omega_S$ which shows that $\mathbb A_{[i\neq 1]}$ acts by exterior multiplication by forms on $S$ valued in vertical differential operators. More precisely, let 
\begin{equation}\label{differential operators}
\mathcal D(\E)\raw S
\end{equation} be the bundle whose fiber over $z \in S$ is the space of differential operators on $\Gamma(X_z ; \E_z)$ where $\E_z$ is the restriction of $\E$ to $X_z$; a section of $\mathcal D(\E)$ is a smooth family of vertical differential operators on $\E$. Then
\begin{equation}
\mathbb A_{[i\neq 1]} \in \Omega^i_S(\mathcal D(\E)).
\end{equation}

There is a specific superconnection constructed from the data of a geometric Clifford family, the \emph{Bismut superconnection}, which has components up to degree 2 and is defined as
\begin{equation}
\begin{split}
\mathbb B &= \mathbb B_{[0]} + \mathbb B_{[1]} + \mathbb B_{[2]}\\
&:= \slashed D + \left(\tilde \nabla^\E + \frac{1}{2}k\right) - \frac{1}{4}c(T).
\end{split}
\end{equation}

The terms in the formula are as follows:
\begin{itemize}
   \item 
      $\slashed D \in \Omega^0_S(\mathcal D(\E))$ is the vertical Dirac operator given locally by 
         \begin{equation}
         \slashed D = \sum_i c(e_i)\nabla_{e_i}: C^\infty(\mathscr E) \raw C^\infty(\mathscr E)
         \end{equation}
      where $(e_i)$ is a local orthonormal basis of $T(X/S)$. 
   \item 
      For $\xi \in \Gamma(TS)$ and $\sigma \in \Omega^0_S(\mathscr H)$, we define 
         \begin{equation}\label{tilde nabla}
         \tilde \nabla^{\E}_\xi \sigma := \nabla^{\mathscr E}_{\tilde \xi} \sigma
         \end{equation} 
      where $\tilde \xi \in \Gamma(H)$ is the horizontal lift of $\xi$. 
   \item 
      $T \in C^\infty(\Lambda^2 H^* \otimes T(X/S))$ is the Frobenius tensor defined by 
         \begin{equation}\label{frobenius}
         T(\eta, \zeta) = -P[\eta, \zeta]
         \end{equation}
      for $\eta, \zeta \in \Gamma(H)$, where $P$ is the projection onto the vertical defined in (\ref{split exact}).
   \item
      $c(T) \in C^\infty(\Lambda^2 H^* \otimes \End(\E)) \subset \Omega^2_S(\mathcal D(\E))$
      is obtained by applying the Clifford map 
         \begin{equation}\label{clifford map}
         c: \Cl(T(X/S)) \raw \End(\mathscr E)
         \end{equation} 
      to the Frobenius tensor $T$.
   \item 
      $k \in C^\infty(H^*) \subset C^\infty(H^* \otimes \End(\E)) \subset \Omega^1_S(\mathcal D(\E))$ is multiplication by the mean curvature tensor of the fibers. Using the horizontal distribution, both $k$ and the vertical volume form $\nu^{X/S} \in C^\infty(\Lambda^n T^\ast(X/S))$ can be identified with forms of degree 1 and $n$ in $\Omega_X$. Then the mean curvature form is characterized by the equation
         \begin{equation}\label{mean curvature}
            d\nu^{X/S} = k \wedge \nu^{X/S} + \sum_{i=1}^n \langle T, e_i \rangle \iota_{e_i}\nu^{X/S}.
         \end{equation}
\end{itemize}

\begin{remark}
$\slashed D$ is valued in vertical differential operators of order 1 on $\E$ while $k, c(T)$ are valued in vertical differential operators of order 0.
\end{remark}

The curvature of the Bismut superconnection 
\begin{equation}
\mathcal F := \mathbb B^2
\end{equation}
is an element of $\Omega_S(\mathcal D(\E))$. We express it as
   \begin{equation}
   \mathcal F = \slashed D^2 + \mathcal F^+
   \end{equation}
where $\slashed D^2 \in \Omega^0_S(\mathscr D(\E))$ is the Dirac Laplacian and $\mathcal F^+ \in \Omega^{i\geq 1}_S(\mathcal D(\E))$ is a finite sum of forms valued in vertical differential operators of order 1. 

\begin{remark}\label{dirac laplacian}
The heat operator $e^{-t\slashed D^2}$ of the Dirac Laplacian has a smoothing kernel and is therefore trace-class due to compactness of the fibers. We refer the reader to \cite{berline2003heat} for a construction.
\end{remark}

\subsubsection{Heat Operators and Smoothing Kernels}

We now recount from \cite{berline2003heat} the definition of the heat operator for the Bismut superconnection and the argument to show it has a smoothing kernel with an eye toward generalization in the next section. To do so, we first recall some facts about smoothing operators.

Let 
\begin{equation}\label{smoothing operators}
\mathcal K(\E) \raw S
\end{equation} 
be the bundle whose fiber over $z\in S$ is the space of smoothing operators on $\Gamma(X_z ; E_z)$. A section $K \in \mathcal K(\E)$ is a smooth family of vertical smoothing operators on $\E$ and has a smooth family of smooth integral kernels $k \in C^\infty(X \times_\pi X; \E \boxtimes \E^\ast)$. The smooth family $K$ acts on sections $\varphi \in \Gamma(\E)$ by 
   \begin{equation}
      (K\varphi)(x) := \int_{X/S}k(x,y)\varphi(y)d\text{Vol}_{X/S}(y)
   \end{equation}
where $d\text{Vol}_{X/S}$ is the vertical Riemannian volume measure for the vertical metric $g^{X/S}$.

The restriction of $K$ to the diagonal $X \subset X \times_\pi X$ is a section of $\End(\E)$. Applying the supertrace on $\End(\E)$ to this section gives a function on $X$ which can be integrated along the compact vertical fibers using the vertical volume form to give a smooth function on $S$. This is the \emph{supertrace} on sections of $\mathcal K(\E)$:

\begin{equation}\label{supertrace}
\begin{split}
   sTr: \Gamma(&\mathcal K(\E)) \raw C^\infty(S)\\
   K &\mapsto \int_{X/S} sTr \left( k(x,x)\right)d\text{Vol}_{X/S}(x). 
\end{split}
\end{equation}

For $t > 0$ the heat operator is defined using the \emph{Volterra series}
   \begin{equation}\label{Volterra}
   e^{-t\mathcal F} := e^{-t\slashed D^2} + \sum_{k\geq 1} (-t)^k I^{(k)}
   \end{equation}
where $e^{-t\slashed D^2} \in \Omega^0_S(\mathcal K(\E))$ is the heat operator for the Dirac Laplacian $\slashed D^2$ (see Remark \ref{dirac laplacian}) and 
   \begin{equation}I^{(k)} := \int_{\Delta_k} e^{-t\sigma_0\slashed D^2} \mathcal F^+ e^{-t\sigma_1 \slashed D^2} \cdots \mathcal F^+ e^{-t\sigma_k\slashed D^2}d\sigma
   \end{equation}
is a section of vertical operators obtained by integrating a continuous family of vertical operators over the $k$-simplex $\Delta_k := \{\sigma \in \R_+^{k+1} | \sigma_0 + \cdots + \sigma_k = 1\}$ with the standard Lebesgue measure.
Note that (\ref{Volterra}) is a finite sum since $I^{(k)}$ is a form of degree $\geq k$ and therefore vanishes for large $k$ due to the finite dimensionality of the exterior algebra $\Lambda(T^\ast S)$. 

The proof of the following lemma can be found in the appendix to Chapter 9 of \cite{berline2003heat}.

\begin{lemma}\label{smoothing lemma}
Let $D \in \Gamma(\mathcal D(\E))$ be a smooth family of vertical differential operators. Then the vertical operator
\begin{equation}
   \int_{\Delta_k} e^{-t\sigma_0\slashed D^2} D e^{-t\sigma_1\slashed D^2}\cdots De^{-t\sigma_k\slashed D^2}d\sigma
\end{equation}
is a family of vertical smoothing operators, i.e. it is a section of $\mathcal K(\E)$.
\end{lemma}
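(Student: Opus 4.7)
The plan is to show that the integrated operator extends to a bounded map $H^{-N}(\E_z) \to H^N(\E_z)$ for every $N \ge 0$, uniformly in $z$ on compacta and with smooth $z$-dependence. By the Schwartz kernel theorem, this forces the corresponding Schwartz kernel to be smooth, i.e.\ the operator lies in $\Gamma(\mathcal K(\E))$.

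First I would observe that $\mathcal K(\E)$ is a two-sided ideal in the algebra of continuous operators on $\Gamma(\E)$, so for $\sigma$ in the interior of $\Delta_k$, each factor $e^{-t\sigma_i\slashed D^2}$ is smoothing (Remark \ref{dirac laplacian}) and the integrand
\begin{equation*}
A(\sigma) \;:=\; e^{-t\sigma_0\slashed D^2}\, D\, e^{-t\sigma_1\slashed D^2}\, \cdots\, D\, e^{-t\sigma_k\slashed D^2}
\end{equation*}
is already a smoothing operator with smooth dependence on $\sigma$. The content of the lemma is therefore to control the integrand uniformly up to the boundary faces $\{\sigma_i = 0\}$ of $\Delta_k$.

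The quantitative input is the classical semigroup estimate for the generalized Laplacian $\slashed D^2$ on each fiber $X_z$: for all $j \in \Z$ and $r \ge 0$,
\begin{equation*}
\big\| e^{-s\slashed D^2} \big\|_{H^j(\E_z) \to H^{j+r}(\E_z)} \;\le\; C(j,r)\, s^{-r/2}, \qquad s \in (0,1],
\end{equation*}
together with the uniform bound $\|e^{-s\slashed D^2}\|_{H^j \to H^j} \le C(j)$ for all $s \ge 0$, and the uniform boundedness of $D : H^j \to H^{j-m}$ where $m$ is the fiberwise order of $D$, all estimates holding uniformly in $z$ on compacta. With these in hand, I would split $\Delta_k = \bigcup_{i=0}^k \Delta_k^{(i)}$ into the closed pieces $\Delta_k^{(i)} := \{\sigma \in \Delta_k : \sigma_i = \max_j \sigma_j\}$, so that $\sigma_i \ge 1/(k+1)$ on $\Delta_k^{(i)}$. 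On this piece I route the Sobolev chain so that the lone factor $e^{-t\sigma_i\slashed D^2}$ absorbs the entire required regularity gain:
\begin{equation*}
H^{-N} \longrightarrow H^{-N-im} \xrightarrow{\;e^{-t\sigma_i\slashed D^2}\;} H^{N+(k-i)m} \longrightarrow H^N,
\end{equation*}
where the outer arrows are uniformly bounded in $\sigma$ by the uniform $H^j \to H^j$ estimate for the remaining heat factors and the boundedness of the $k$ copies of $D$. The middle arrow is bounded by $C\, (t\sigma_i)^{-(2N+km)/2} \le C'(k,N,m)\, t^{-(2N+km)/2}$ on $\Delta_k^{(i)}$, which is a constant and hence trivially integrable over the simplex. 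Summing over $i$ bounds $\big\| \int_{\Delta_k} A(\sigma)\, d\sigma \big\|_{H^{-N} \to H^N}$ for every $N$; the Schwartz kernel theorem then yields smoothness of the Schwartz kernel. Smoothness in $z \in S$ follows from the same estimates applied to $z$-derivatives of the heat kernel, justifying differentiation under the integral.

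The main obstacle will be establishing the semigroup estimate displayed above, which is not formal; it is a standard consequence of the spectral theory of the generalized Laplacian $\slashed D^2$ on compact fibers, with uniformity in $z$ obtained from the smooth family structure, and is carried out in detail in the appendix to Chapter 9 of \cite{berline2003heat}.
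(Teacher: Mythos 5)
Your proof is correct and reproduces the approach taken by the source the paper cites: the paper states this lemma without giving its own argument, deferring to the appendix to Chapter 9 of \cite{berline2003heat}, where the same strategy appears — fiberwise Sobolev semigroup bounds $\|e^{-s\slashed D^2}\|_{H^j\to H^{j+r}} \le C\, s^{-r/2}$, decomposition of $\Delta_k$ into the regions where $\sigma_i \ge 1/(k+1)$ so that one heat factor absorbs the entire regularity gain, and $H^{-N}\to H^N$ boundedness for all $N$ yielding smoothness of the kernel. Your sketch is essentially that proof written out, including the correct handling of the boundary faces of the simplex and the need for $z$-uniformity to get a smooth family of kernels.
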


\begin{proposition}\label{smoothing}
The section of vertical operators $I^{(k)}$ is a form on $S$ valued in vertical smoothing operators, i.e.
$I^{(k)}\in \Omega{}_S(\mathcal K(\E))$
\end{proposition}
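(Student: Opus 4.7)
My plan is to reduce the statement to Lemma~\ref{smoothing lemma} by separating the $\Omega_S$-form part of $\mathcal F^+$ from its differential-operator part. Concretely, using a partition of unity on $S$ I would write $\mathcal F^+$ as a finite sum $\sum_\alpha \omega_\alpha \otimes D_\alpha$, where each $\omega_\alpha \in \Omega_S^{\geq 1}$ is a form on the base and each $D_\alpha \in \Gamma(\mathcal D(\E))$ is a smooth family of vertical differential operators of order at most $1$. This is possible because the components $\mathbb B_{[i \neq 1]}$ of the Bismut superconnection are $\Omega_S$-linear multiplication by forms valued in vertical operators, and $\slashed D^2$ contributes only a scalar in $\Omega_S^0$, so $\mathcal F^+$ lies in $\Omega^{\geq 1}_S(\mathcal D(\E))$ and admits such a decomposition.

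Next I would substitute this decomposition into the Volterra integrand and distribute, expanding each $\mathcal F^+$ as a sum over $\alpha$. Since each heat operator $e^{-t\sigma_i\slashed D^2}$ lies in $\Omega^0_S(\mathcal K(\E))$, it commutes in the graded sense with any form pulled back from $S$; so I would slide all the $\omega_{\alpha_j}$ factors to the left, collecting signs from the Koszul rule as forms pass through forms. The upshot is an identity of the schematic form
\begin{equation}
   I^{(k)} = \sum_{\alpha_1,\ldots,\alpha_k} \pm\, (\omega_{\alpha_1}\wedge\cdots\wedge \omega_{\alpha_k}) \otimes \int_{\Delta_k} e^{-t\sigma_0 \slashed D^2}\, D_{\alpha_1}\, e^{-t\sigma_1 \slashed D^2}\cdots D_{\alpha_k}\, e^{-t\sigma_k \slashed D^2}\, d\sigma,
\end{equation}
where the signs are determined by the degrees of the $\omega_{\alpha_j}$ and are irrelevant for the conclusion.

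I would then invoke Lemma~\ref{smoothing lemma}, remarking that its proof (given in \cite{berline2003heat}) goes through verbatim when the single operator $D$ is replaced by a tuple of possibly distinct order-$1$ vertical operators $D_{\alpha_1}, \ldots, D_{\alpha_k}$ appearing one per slot, since the smoothing property comes from the flanking heat kernels $e^{-t\sigma_i\slashed D^2}$ regularizing any finite product of order-$1$ operators between them. Hence each of the integrals above is a section of $\mathcal K(\E)$, so each term of the sum is an element of $\Omega_S(\mathcal K(\E))$, and therefore so is $I^{(k)}$.

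The only real subtlety is bookkeeping: verifying that the decomposition of $\mathcal F^+$ patches globally (handled by a partition of unity argument on $S$) and that signs from permuting forms in $\Omega_S$ past one another do not affect the smoothing conclusion. The genuine analytic content is entirely packaged in Lemma~\ref{smoothing lemma}, and no new estimates are required.
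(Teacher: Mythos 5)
Your proof is correct and matches the paper's approach: the paper's one-line argument simply notes that $\mathcal F^+$ is valued in $\mathcal D(\E)$ and invokes Lemma~\ref{smoothing lemma} term by term, which is exactly the decomposition of $\mathcal F^+$ into form-operator summands and the sliding of the $\Omega_S$-form factors past the degree-zero heat operators that you spell out in detail. Your observation that the lemma must be read with possibly distinct order-one operators in each slot is a legitimate refinement that the paper leaves implicit.
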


\begin{proof}
Since $\mathcal F^+$ is valued in sections of $\mathcal D(\E)$, applying Lemma \ref{smoothing lemma} to each term gives the desired result.
\end{proof}

\begin{corollary}\label{bismut smoothing}
For $t > 0$, the heat operator associated to the Bismut superconnection is a form valued in vertical smoothing operators, i.e. $e^{-t\mathcal F} \in \Omega_S(\mathcal K(\E))$
\end{corollary}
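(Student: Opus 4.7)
The plan is to read off the result directly from the Volterra series expansion (\ref{Volterra}) together with the preceding proposition. Since that expansion writes $e^{-t\mathcal F}$ as a finite sum
\begin{equation*}
e^{-t\mathcal F} = e^{-t\slashed D^2} + \sum_{k\geq 1}(-t)^k I^{(k)},
\end{equation*}
it suffices to show each summand lies in $\Omega_S(\mathcal K(\E))$ and that the sum has only finitely many nonzero terms.

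First I would dispose of the $k=0$ term: the heat operator $e^{-t\slashed D^2}$ of the vertical Dirac Laplacian is a smooth family of vertical smoothing operators by Remark \ref{dirac laplacian}, hence a degree-zero element of $\Omega_S(\mathcal K(\E))$. Next, for each $k\geq 1$, Proposition \ref{smoothing} directly gives $I^{(k)}\in\Omega_S(\mathcal K(\E))$, so the same holds for $(-t)^k I^{(k)}$.

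Finally I would note that the sum is finite. Each factor $\mathcal F^+$ appearing in $I^{(k)}$ is a form on $S$ of degree at least $1$ valued in $\mathcal D(\E)$, so $I^{(k)}$ has form degree at least $k$; because $S$ is finite dimensional, $I^{(k)} = 0$ for $k$ larger than $\dim S$. Thus the Volterra series is a finite sum of elements of $\Omega_S(\mathcal K(\E))$, and since this space is closed under sums the conclusion $e^{-t\mathcal F}\in\Omega_S(\mathcal K(\E))$ follows.

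There is no real obstacle here; the entire content has been pushed into Proposition \ref{smoothing} and Lemma \ref{smoothing lemma}. The only thing to be careful about is the bookkeeping that $\mathcal F^+$ raises the total form degree by at least one, which is what makes the Volterra series truncate.
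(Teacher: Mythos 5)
Your proof is correct and follows exactly the paper's approach: the corollary is an immediate consequence of Proposition \ref{smoothing}, Remark \ref{dirac laplacian}, and the finiteness of the Volterra series already noted in the text.
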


\subsubsection{Bismut's Theorem}
We now recall the ingredients necessary to state Bismut's index theorem for geometric families.

There is a one parameter family of geometric Clifford families obtained by scaling the vertical metric to $\frac{1}{t} g^{X/S}$ for each $t>0$. This gives rise to a one parameter family of Bismut superconnections
\begin{equation}\label{scaled superconnection}
   \mathbb B^t := t^{1/2}\slashed D + \left(\nabla^\mathscr \E + \frac{1}{2}k\right) -t^{1/2}\frac{1}{4}c(T)
\end{equation}
which can be expressed as
\begin{equation}
\mathbb B^t = t^{1/2}\delta_t \mathbb A \delta_t^{-1}
\end{equation}
if we set $\delta_t : \Omega_S(\sH) \raw\Omega_S(\sH)$ to be the operator that multiplies $\Omega^i_S(\sH)$ by $t^{-i/2}$.

The curvature of $\mathbb B^t$ is given by 
\begin{equation}
\mathcal F^t = (\mathbb B^t)^2 = \delta_t (t \mathcal F) \delta_t^{-1}.
\end{equation}
Using Corollary \ref{bismut smoothing} the \emph{Chern character} of $\mathbb B^t$ is defined as the supertrace

\begin{equation}\label{chern character}
\begin{split}
\ch(\mathbb B^t) &:= sTr(e^{-\mathcal F^t})\\
&= sTr(e^{-\delta_t t \mathcal F \delta_t^{-1}})\\
&= \delta_t sTr(e^{-t\mathcal F})\delta_t^{-1}
\end{split}
\end{equation}
where we have extended the supertrace (\ref{supertrace}) to a map $sTr: \Omega_S(\mathcal K(\E)) \raw \Omega_S$ on the level of forms.

Recall there is a map 
\begin{alignat}{3}
\alpha:\hspace{.25cm}& \so_n &&\raw \hspace{.25cm}& &\mathfrak{spin}_n \subset \Cl_n \nonumber\\
& A_{ij} &&\mapsto & & \frac{1}{4}\sum_{ij} A_{ij}c(e_i)c(e_j) \label{so rep}
\end{alignat}
The \emph{twisting curvature} $F^{\E/\mathbb S} \in \Omega^2_{X}(\End_{\Cl(T(X/S))}(\mathscr E))$ is

\begin{equation}\label{twisting curvature}
   F^{\mathscr E/\mathbb S} := (\nabla^{\mathscr E})^2 - c(\alpha(\Omega^{X/S}))
\end{equation}
Here $c: \Cl(T(X/S)) \raw \End(\E)$ is (\ref{clifford map}) and $\Omega^{X/S}$ is (\ref{vertical curvature}). We refer the reader to Proposition 3.43 in \cite{berline2003heat} for a justification of the statement that $F^{\E/\mathbb S}$ is valued in endomorphisms that commute with the vertical Clifford action.
In particular, $F^{\mathscr E/\mathbb S}$ commutes with $\Gamma$ defined in (\ref{chirality operator}) and therefore preserves the $\Z/2$-grading on $\E$.

The \emph{relative Chern character} of $F^{\E/\mathbb S}$ is then defined to be
\begin{equation}\label{relative chern}
\ch_{\E/\mathbb S}(F^{\E/\mathbb S}) := 2^{-n/2}sTr(\exp(-\Gamma \cdot F^{\mathscr E/\mathbb S})
\end{equation}
 where $n$ is the dimension of the even dimensional fiber of $\pi$. 

We can now state the families index theorem:
\begin{theorem}[\cite{Bismut:1986uh}]\label{family index theorem}
   \begin{equation}
      \lim_{t\raw 0} \ch(\mathbb B^t) = \int_{X/S} \hat A(\Omega^{X/S})\ch_{\E/\mathbb S}(F^{\E/\mathbb S})
   \end{equation}
\end{theorem}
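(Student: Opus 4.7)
The plan is to follow Bismut's local index approach as developed in \cite{berline2003heat}. First I would unfold the Chern character into a fiberwise integral of the pointwise supertrace: by Corollary \ref{bismut smoothing} the heat operator $e^{-\mathcal F^t}$ has a smooth integral kernel $k_t(x,y)$, and combining (\ref{chern character}) with (\ref{supertrace}) gives
\begin{equation}
\ch(\mathbb B^t) = \int_{X/S} sTr\bigl(k_t(x,x)\bigr)\, d\mathrm{Vol}_{X/S}(x).
\end{equation}
The theorem then reduces to a local assertion: at each $x\in X$, the pointwise supertrace $sTr(k_t(x,x))$ admits a limit as $t\raw 0$ equal to $\hat A(\Omega^{X/S})(x)\,\ch_{\E/\mathbb S}(F^{\E/\mathbb S})(x)$, and the convergence is uniform enough to pass under the fiber integral.

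The main tool is Getzler's rescaling. I would fix $x_0 \in X$, work in vertical geodesic normal coordinates centered at $x_0$, and trivialize $\E$ by parallel transport along radial vertical geodesics. In this frame the Lichnerowicz--Bismut formula presents $\mathcal F^t$ as a second order operator whose principal symbol is $t$ times the vertical Laplacian and whose lower order curvature terms encode $\Omega^{X/S}$ and $F^{\E/\mathbb S}$ via (\ref{vertical curvature}) and (\ref{twisting curvature}). I would then apply the rescaling that dilates vertical coordinates by $t^{-1/2}$, weights forms on $S$ by $t^{-i/2}$ in degree $i$, and substitutes each Clifford generator $c(e_j)$ by $t^{-1/2}(e^j\wedge\, -\, \iota_{e_j})$ acting on $\Lambda T^*(X/S)$. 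Under this substitution the rescaled family converges as $t\raw 0$ to the generalized harmonic oscillator
\begin{equation}
K \,=\, -\sum_i \Bigl(\partial_i - \tfrac{1}{4}\sum_j \Omega^{X/S}_{ij}(x_0)\,x^j\Bigr)^{\!2} + F^{\E/\mathbb S}(x_0),
\end{equation}
acting on sections of $\Lambda T^*_{x_0}(X/S)\otimes \End_{\Cl(T(X/S))}(\E)_{x_0}$.

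The last step is to evaluate the heat kernel of $K$ at the origin at time $1$ by Mehler's formula. The Gaussian factor produces the $\hat A$-form $\hat A(\Omega^{X/S})(x_0)$, and the residual exponential of $F^{\E/\mathbb S}(x_0)$, after applying the relative supertrace with the normalization $2^{-n/2}$, is exactly $\ch_{\E/\mathbb S}(F^{\E/\mathbb S})(x_0)$ as defined in (\ref{relative chern}); assembling the pointwise limits and integrating along the fibers gives the right hand side. The main obstacle is the analytic justification of this limit. The rescaling introduces a priori divergent negative powers $t^{-n/2}$ that must be cancelled by the Berezin integration identity expressing the supertrace on a spinor module as extraction of the top Clifford component, which under Getzler's symbol calculus is the top form degree on $\Lambda T^*(X/S)$. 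In parallel one needs uniform heat kernel estimates for the rescaled operators, obtained via Duhamel's principle and a parametrix construction on a neighborhood of the diagonal, in order to interchange the limit with the fiber integral and with the pointwise supertrace. These estimates, carried out in detail in Chapter 10 of \cite{berline2003heat}, constitute the technical heart of the argument.
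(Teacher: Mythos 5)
The paper states Theorem~\ref{family index theorem} as a citation of Bismut's result and does not reprove it; the preceding section only recalls the statement and defers to \cite{berline2003heat} for the exposition, so there is no in-paper proof to compare against. Your sketch is a faithful high-level outline of the local index / Getzler-rescaling argument of Chapter~10 of \cite{berline2003heat}: unfolding $\ch(\mathbb B^t)$ into the fiber integral of the pointwise supertrace of the heat kernel, passing to vertical geodesic normal coordinates with radial parallel transport, invoking the Lichnerowicz--Bismut formula, rescaling to the limiting generalized harmonic oscillator, evaluating by Mehler's formula, and justifying the interchange of limit and integral via Duhamel and parametrix estimates. One detail is stated imprecisely: the Getzler rescaling replaces $c(e_j)$ by $t^{-1/2}\,e^j\wedge\,-\,t^{1/2}\,\iota_{e_j}$, not by $t^{-1/2}\bigl(e^j\wedge\,-\,\iota_{e_j}\bigr)$. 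The asymmetric powers of $t$ are essential: they preserve the Clifford relations under rescaling and cause the contraction part to be suppressed as $t\raw 0$, so that Clifford multiplication degenerates to exterior multiplication and the supertrace reduces to extraction of the top exterior degree, cancelling the $t^{-n/2}$ divergence. With your formula the Clifford relation acquires an overall $t^{-1}$ and the limiting model operator would not be well defined.
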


\section{The Equivariant Families Index Theorem}\label{Section 4}
We now embark on proving an equivariant version of Theorem \ref{family index theorem} using the tools laid out in \S \ref{Section 2}.

Let $(\E \raw X \xrightarrow{\pi} S, g^{X/S}, H ,\nabla^\E)$ be a geometric Clifford family with a $G$-action that preserves the geometric data.  
There is an iterated bundle of Borel quotients 
\begin{equation}\label{borel}
(\mathscr E_G)_\nabla \raw (X_G)_\nabla \raw (S_G)_\nabla \raw B_\nabla G
\end{equation}
and pulling back (\ref{borel}) along the map (\ref{classifying map}) gives an iterated map of fiber bundles
\begin{equation}
\E_Q \raw X_Q \xrightarrow{\pi_Q} S_Q \raw M
\end{equation} 
associated to the principal $G$-bundle $Q\raw M$ with connection $\Phi$. 

We recall that the total space of the bundle $\mathscr H \raw S$ is infinite dimensional and we regard it as a sheaf of sets on the site of manifolds. In particular, a map from the test manifold $M$ to $\mathscr H$ consists of a map $\varphi: M \raw S$ and a section of the pullback bundle $\varphi^\ast \mathscr H \raw M$, which can be identified as a section of $\varphi^\ast \E \raw \varphi^\ast X$. Thus the set $\mathscr H(M)$ fibers over $S(M)$ where the fiber over $\varphi \in S(M)$ is $\Gamma(\varphi^\ast X; \varphi^\ast \E)$.

The action on sections of $\E$ induces an action on the bundle $\mathscr H$. We use this action to construct the simplicial sheaf $(\sH_G)_\nabla$ whose value on $M$ is the simplicial set
\begin{equation}
(\sH_G)_\nabla(M) := E_\nabla G(M) \times \sH(M) \leftleftarrows E_\nabla G(M) \times \sH(M) \times G(M) \mathrel{\substack{\textstyle\leftarrow\\[-0.6ex]
                      \textstyle\leftarrow \\[-0.6ex]
                      \textstyle\leftarrow}} \cdots
\end{equation}

This fits into a bundle of simplicial Borel quotients 
\begin{equation}
(\sH_G)_\nabla \raw (S_G)_\nabla \raw B_\nabla G
\end{equation}
that pulls back along $f$ to 
\begin{equation}
\sH_Q \raw S_Q \raw M.
\end{equation}

There is a horizontal distribution $H_Q:= q_\ast(V \oplus H)$ on $\pi_Q$  obtained by applying Construction \ref{distribution} to $\pi$, where $V$ is (\ref{V distribution}) .  By $G$-invariance, the metric $p^\ast g^{X/S}$ descends to a metric $g^{X/S}_Q$ on $T(X_Q/S_Q)$. Thus we have shown

\begin{lemma}\label{geometric family lemma}
$(X_Q \xrightarrow{\pi_Q} S_Q, H_Q, g^{X/S}_Q)$ is a geometric family.
\end{lemma}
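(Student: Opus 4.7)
The plan is to verify the three defining conditions of a geometric family in turn for $(X_Q \xrightarrow{\pi_Q} S_Q, H_Q, g^{X/S}_Q)$, since Construction \ref{distribution} and the associated bundle formalism from \S\ref{Section 2} have already produced the data; only checking remains.

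First I would verify that $\pi_Q: X_Q \to S_Q$ is a smooth fiber bundle with compact even dimensional fibers. Since $\pi: X \to S$ is $G$-equivariant, $\pi_Q$ is the map of associated bundles obtained by applying $Q \times_G (-)$ to $\pi$. Local triviality of $Q \to M$ then gives local triviality of $\pi_Q$ with fiber diffeomorphic to the fiber of $\pi$, which is compact and even dimensional by hypothesis.

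Next I would verify that $H_Q := q_\ast(V \oplus H) \subset TX_Q$ is a horizontal distribution, i.e. that $(\pi_Q)_\ast$ restricts to a fiberwise isomorphism $H_Q \xrightarrow{\sim} TS_Q$. The key points are: (i) $V \oplus H \subset T(Q \times X)$ is $G$-invariant because $V$ is $G$-invariant by construction and $H$ is $G$-invariant because the $G$-action preserves the geometric data; (ii) $V \oplus H$ meets the tangent space to a $G$-orbit on $Q \times X$ only at zero, since any infinitesimal generator $(\xi_Q(q), \xi_X(x))$ lying in $V \oplus H$ would force $\xi_Q(q) \in V$, but $\xi_Q(q)$ is vertical in $Q \to M$ and $V$ is horizontal, so $\xi = 0$ by freeness of the $G$-action on $Q$. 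Thus $q_\ast$ is injective on $V \oplus H$ and $H_Q$ is a genuine subbundle. The commutative square $\pi_Q \circ q = q' \circ (\mathrm{id}\times \pi)$, with $q': Q \times S \to S_Q$, together with $(\mathrm{id}\times \pi)_\ast(V \oplus H) = V \oplus TS$ and surjectivity of $q'_\ast$ modulo orbit directions, yields $(\pi_Q)_\ast H_Q = TS_Q$; a dimension count then upgrades this to a fiberwise isomorphism.

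Finally I would verify that $g^{X/S}_Q$ is a well-defined metric on $T(X_Q/S_Q)$. The identification I would use is $T(X_Q/S_Q) \simeq q_\ast(0 \oplus T(X/S)) \simeq Q \times_G T(X/S)$: the inclusion $\subset$ is obvious and the reverse follows from chasing the kernel of $(\pi_Q)_\ast$ through the quotient $T(X_Q) = T(Q\times X)/\mathfrak g$, where any class $[\alpha,\beta]$ in the kernel can be represented (after subtracting an orbit vector) by one of the form $[0,\gamma]$ with $\gamma \in T(X/S)$. Once this identification is in hand, $G$-invariance of $g^{X/S}$ (from preservation of the geometric data) is exactly what is needed for it to descend to a fiber metric on the associated bundle $Q \times_G T(X/S) = T(X_Q/S_Q)$, and we set this to be $g^{X/S}_Q$.

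The only step that takes any real care is the transversality calculation in the horizontal distribution argument, where one must not confuse the vertical of $Q \to M$ with the vertical of $X \to S$; the rest is bookkeeping with associated bundles.
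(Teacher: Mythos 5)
Your proof is correct and matches the paper's approach: the paper simply cites Construction \ref{distribution} for the horizontal distribution and invokes $G$-invariance of $g^{X/S}$ for the descent to $g^{X/S}_Q$, leaving the verifications you spell out implicit. Your transversality calculation (that $V\oplus H$ meets the orbit directions on $Q\times X$ trivially, via freeness of $G$ on $Q$) is a nice clarification of why $q_\ast(V\oplus H)$ is a genuine subbundle, since Construction \ref{distribution} is stated for a principal bundle $P\to X$ rather than a general fiber bundle $X\to S$, and the paper glosses over the adaptation.
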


Applying Lemma \ref{Q covariant derivative} to $\nabla^\E$ implies there is a covariant derivative $\nabla^{\mathscr E}_Q$ on $\E_Q \raw X_Q$. Similarly there is a covariant derivative $\nabla^{X/S}_Q$ on $T(X_Q/S_Q)$. In fact, there is yet another covariant derivative $\nabla^{X_Q/S_Q}$ on $T(X_Q/S_Q)$ obtained by applying Construction \ref{vertical connection} to the geometric family and they are equal:
\begin{equation}
\nabla^{X_Q/S_Q} = \nabla^{X/S}_Q.
\end{equation}
We refer the reader to Lemma 16 of \cite{EquivDet} for a proof of this statement.

\begin{remark}
The curvature 
\begin{equation}\label{vertical Q curvature}
\Omega_Q^{X/S} := (\nabla^{X/S}_Q)^2
\end{equation}
is a form in $\Omega_{X_Q}(\so(TX/S)) \simeq \Omega_{\mathcal B_{SO}(T(X_Q/S_Q))}(\so(n))$. It is identified with (\ref{Q curvature}) when $P = \mathcal B_{SO}(T(X/S))$ is the bundle of frames on $T(X/S)$.
\end{remark}

\begin{proposition}
$(\E_Q \raw X_Q \raw S_Q, g^{X/S}_Q, H_Q, \nabla^\E_Q)$ is a geometric Clifford family.
\end{proposition}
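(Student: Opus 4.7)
The plan is to verify the two remaining conditions in the definition of a geometric Clifford family, namely that $\E_Q \raw X_Q$ is a $\Z/2$-graded $\Cl(T(X_Q/S_Q), g^{X/S}_Q)$-module bundle and that $\nabla^\E_Q$ is a Clifford connection with respect to $\nabla^{X/S}_Q$. The geometric family structure on $(X_Q \xrightarrow{\pi_Q} S_Q, H_Q, g^{X/S}_Q)$ is already established in Lemma \ref{geometric family lemma}.

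For the Clifford module structure, I would observe that since the $G$-action preserves $g^{X/S}$, the Clifford multiplication $c: \Cl(T(X/S)) \otimes \E \raw \E$ is $G$-equivariant, so under the identifications (\ref{associated id}) the map $p^\ast c$ on $Q \times X$ is $G$-basic and descends to Clifford multiplication by $T(X_Q/S_Q)$ on $\E_Q$. The same argument applies to the $\Z/2$-grading: the chirality operator $\Gamma$ in (\ref{chirality operator}) is canonically determined by the Clifford structure and the orientation of $T(X/S)$, both of which are $G$-invariant, so $\Gamma$ descends to a chirality operator on $\E_Q$ whose eigenbundle decomposition gives the $\Z/2$-grading $\E_Q = \E_Q^+ \oplus \E_Q^-$.

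For the Leibniz rule, I would work on $Q \times X$ using Lemma \ref{Q covariant derivative}, which gives $\nabla^\E_Q = \nabla^\E - \phi^a \tau^\E_a$ and analogously $\nabla^{X/S}_Q = \nabla^{X/S} - \phi^a \tau^{T(X/S)}_a$. Given $v \in \Gamma(T(X/S))$ and $\sigma \in \Gamma(\E)$ (viewed as $G$-invariant sections of the pullbacks), I would expand
\begin{equation}
\nabla^\E_Q(v\cdot\sigma) = \nabla^\E(v\cdot\sigma) - \phi^a\tau^\E_a(v\cdot\sigma)
\end{equation}
and use the Clifford Leibniz rule for $\nabla^\E$ on the first term. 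The key identity I need is that $\tau^\E_a$ satisfies
\begin{equation}
\tau^\E_a(v\cdot\sigma) = \tau^{T(X/S)}_a v \cdot \sigma + v \cdot \tau^\E_a\sigma.
\end{equation}
This follows by writing $\tau^\E_a = \mathcal L_{\xi_a} - \nabla^\E_{\xi_a}$: the Lie derivative satisfies a Leibniz rule for Clifford multiplication because the $G$-action on $\E$ is a map of $\Cl(T(X/S))$-modules covering the action on $T(X/S)$, and $\nabla^\E_{\xi_a}$ satisfies the Leibniz rule by the Clifford property of $\nabla^\E$. Subtracting gives the claim. Substituting back and regrouping yields $\nabla^\E_Q(v\cdot\sigma) = \nabla^{X/S}_Q v \cdot \sigma + v\cdot\nabla^\E_Q\sigma$.

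The only conceptual point, and the one I would treat with care, is the identity $\mathcal L_{\xi_a}(v\cdot\sigma) = (\mathcal L_{\xi_a}v)\cdot\sigma + v\cdot(\mathcal L_{\xi_a}\sigma)$; everything else is a direct substitution. This in turn is the infinitesimal form of the statement that the $G$-action commutes with Clifford multiplication, which is exactly the hypothesis that $G$ preserves the Clifford module structure. Once this is in place, the result follows mechanically from the formulas of Lemma \ref{Q covariant derivative}.
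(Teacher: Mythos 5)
Your proposal is correct and follows essentially the same route as the paper: both proofs reduce the Clifford connection condition to the key identity $\tau^\E_a(v\cdot\sigma) = \tau^{X/S}_a v \cdot \sigma + v \cdot \tau^\E_a\sigma$ and then substitute into the formula $\nabla^\E_Q = \nabla^\E - \phi^a\tau^\E_a$ from Lemma \ref{Q covariant derivative}. The only real difference is in how that identity is established: you argue that $\mathcal L_{\xi_a}$ satisfies a Leibniz rule over Clifford multiplication (valid for arbitrary sections), whereas the paper works directly with $G$-invariant lifts $\tilde v, \tilde\sigma$ on $Q\times X$ so that $\mathcal L_a\tilde v = \mathcal L_a\tilde\sigma = \mathcal L_a(\tilde v\cdot\tilde\sigma) = 0$ and the Lie derivative terms drop out before the Leibniz step — a slightly shorter path, though both rest on the same underlying fact, the $G$-equivariance of the Clifford module structure.
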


\begin{proof}
   In light of Lemma \ref{geometric family lemma}, we are left with showing that $\E_Q \raw X_Q$ is a vertical Clifford module bundle with Clifford connection $\nabla^\E_Q$.
   Clifford multiplication on $\E_Q$ by $\Cl(T(X_Q/S_Q), g^{X/S}_Q)$ is defined fiberwise as the action of $\Cl(T(X/S), g^{X/S})$ on $\E$, i.e. sections $v \in \Gamma(X_Q; T(X_Q/S_Q))$, $\sigma \in \Gamma(X_Q, \E_Q)$ lift to $G$-equivariant sections $\tilde v:= q^\ast v \in \Gamma(Q\times X; \underline{T(X/S)})^G$, $\tilde \sigma:= q^\ast \sigma \in \Gamma(Q\times X; \underline \E)^G$ and we set
   \begin{equation}
      v\cdot\sigma := q_\ast(\tilde v\cdot \tilde \sigma).
   \end{equation}

   Recall that by Lemma \ref{Q covariant derivative} 
   \begin{equation}
   q^\ast \nabla^\E_Q = \nabla^\E - \phi^a \tau^\E_a
   \end{equation}
   where $\tau^\E_a := \mathcal L_a - \nabla^\E_a$ in particular has the property
   \begin{equation}
   \begin{split}
      \tau^\E_a(\tilde v \cdot \tilde \sigma) 
      &= (\mathcal L_a - \nabla^\E_a)(\tilde v \cdot \tilde \sigma)\\
      &= -\nabla^\E_a(\tilde v \cdot \tilde \sigma)\\
      &= -\nabla^{X/S}_a \tilde v \cdot \tilde \sigma - \tilde v\cdot \nabla^\E_a \tilde \sigma\\
      &= \tau^{X/S}_a(\tilde v)\cdot \tilde \sigma + \tilde v \cdot \tau^\E_a(\tilde \sigma)
   \end{split}
   \end{equation}
   where we used the $G$-equivariance of $\tilde v, \tilde \sigma$ in the second and fourth equalities.

   We now check
   \begin{equation}
   \begin{split}
      \nabla^\E_Q(v\cdot\sigma) 
      &= q_*\left(\nabla^\E(\tilde v \cdot \tilde \sigma) - \phi^a\tau^\E_a(\tilde v \cdot \tilde \sigma)\right)\\
      &= q_*\left(\nabla^{X/S}\tilde v \cdot \tilde \sigma + \tilde v \cdot \nabla^\E \tilde \sigma - \phi^a (\tau^{X/S}_a\tilde v) \cdot \tilde \sigma - \phi^a \tilde v \cdot \tau^\E_a\tilde \sigma\right)\\
      &= q_*\left(\left(\nabla^{X/S}\tilde v - \phi^a\tau^{X/S}_a\tilde v\right)\cdot \tilde \sigma + \tilde v \cdot \left(\nabla^\E\tilde \sigma - \phi^a \tau^\E_a\tilde \sigma\right)\right)\\
      &= (\nabla_Q^{X/S}v)\cdot \sigma + v \cdot \nabla^\E_Q \sigma
   \end{split}
   \end{equation}
\end{proof}

As before, there is a 1-parameter family of Bismut superconnections 
\begin{equation}\label{Q bismut}
 \mathbb B^t_Q: \Omega_{S_Q}(\sH_Q) \raw \Omega_{S_Q}(\sH_Q)
\end{equation}
associated to the 1-parameter family of geometric Clifford families $(\E_Q \raw X_Q \raw S_Q, \frac{1}{t}g^{X/S}_Q,H_Q, \nabla^\E_Q)$. 
For $t>0$, $\mathbb B^t_Q$ is an odd derivation over the algebra of forms $\Omega_{S_Q}$defined on $\Omega_{S_Q}(\sH_Q)$ by

\begin{equation}
\mathbb B^t_Q := t^{1/2}\slashed D_Q + \left(\tilde\nabla^{\E}_Q + \frac{1}{2}k_Q\right) -t^{-1/2}\frac{1}{4}c(T_Q).
\end{equation}
Here $\slashed D_Q \in \Omega^0_{S_Q}(\mathcal D(\E_Q))$ is the vertical Dirac operator, $\tilde \nabla^\E_Q$ is the covariant derivative on $\sH_Q$ constructed from $\nabla^\E_Q$ as in (\ref{tilde nabla}), $k_Q \in \Omega^1_{S_Q}(\mathcal D(\E_Q))$ is multiplication by the mean curvature tensor (\ref{mean curvature}), and $c(T_Q) \in \Omega^2_{S_Q}(\mathcal D(\E_Q))$ is Clifford multiplication by the Frobenius tensor (\ref{frobenius}). 
The curvature
\begin{equation}
\mathcal F_Q^t : = (\mathbb B^t_Q)^2
\end{equation}
is a form in $\Omega_{S_Q}(\mathcal D(\E_Q))$.

\begin{lemma}
The vertical Dirac operator satisfies
\begin{equation}
\slashed D_Q = \slashed D 
\end{equation}
in $\Omega^0_{Q\times X}(\mathcal D(\underline \E))_{\basic}$.
\end{lemma}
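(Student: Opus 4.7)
The plan is to work on $Q\times X$ under the identification $\Omega^0_{X_Q}(\mathcal D(\E_Q))\simeq \Omega^0_{Q\times X}(\mathcal D(\underline \E))_{\basic}$ and exploit Lemma \ref{Q covariant derivative} together with the observation that the vertical tangent bundle for $\pi_Q$ lifts to vectors with no $Q$-component. First I would choose, locally on $X_Q$, an orthonormal frame $(e_i)$ of $T(X_Q/S_Q)$ for $g^{X/S}_Q$. Because $g^{X/S}_Q$ was constructed as the descent of the $G$-invariant metric $p^\ast g^{X/S}$ along $q:Q\times X\to X_Q$, and because $T(X_Q/S_Q) = q_\ast(0\oplus T(X/S))$, this frame lifts canonically to a $G$-invariant orthonormal frame of $p^\ast T(X/S)\subset T(Q\times X)$, still denoted $(e_i)$. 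The crucial feature is that each $e_i$ is purely tangent to the $X$-factor, with zero $Q$-component.

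Next I would write $\slashed D_Q$ locally using this frame as $\slashed D_Q = \sum_i c(e_i)\nabla^{\E,Q}_{e_i}$. By Lemma \ref{Q covariant derivative}, pulled back along $q$ this becomes
\begin{equation}
q^\ast \nabla^{\E,Q} = \nabla^\E - \phi^a\tau^\E_a
\end{equation}
as a derivation on $\Omega_{Q\times X}(\underline \E)_{\basic}$. Contracting with $e_i$ kills the correction term: since $\phi^a$ is pulled back from $Q$ while $e_i$ is tangent to $X$, we have $\iota_{e_i}\phi^a = 0$, hence $\nabla^{\E,Q}_{e_i} = \nabla^\E_{e_i}$ on $Q\times X$.

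Finally, Clifford multiplication on $\E_Q$ is defined fiberwise from the original Clifford action on $\E$, so $c(e_i)$ on $\underline \E$ is exactly the pullback $p^\ast c(e_i)$; combined with the previous step this gives $q^\ast \slashed D_Q = \sum_i c(e_i)\nabla^\E_{e_i} = \slashed D$ as basic sections of $\mathcal D(\underline \E)$. I do not anticipate a serious obstacle: the argument is essentially bookkeeping once Lemma \ref{Q covariant derivative} and the construction of $g^{X/S}_Q$ from descent are in place, the only content being the choice of an orthonormal frame that lifts to the $X$-direction, which is precisely what annihilates the $\phi^a\tau^\E_a$ correction.
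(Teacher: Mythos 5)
Your proof is correct, but it takes a genuinely different route from the paper's. The paper dispatches this lemma in two sentences by pure invariance-and-descent: since the $G$-action preserves the vertical metric (and the Clifford connection), the operator $\slashed D$ is $G$-invariant, hence $p^\ast \slashed D$ is a $G$-invariant vertical operator on $Q\times X$ and descends to the quotient, where it must coincide with $\slashed D_Q$ because all the defining geometric data descends as well. You instead unwind the local frame formula $\slashed D_Q = \sum_i c(e_i)\nabla^{\E,Q}_{e_i}$ and feed in Lemma \ref{Q covariant derivative}: the correction $\phi^a\tau^\E_a$ in $q^\ast\nabla^\E_Q = \nabla^\E - \phi^a\tau^\E_a$ is killed upon contraction with any lift $e_i$ lying in $p^\ast T(X/S) \subset T(Q\times X)$, since $\iota_{e_i}\phi^a = 0$ for forms pulled back from $Q$. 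What the paper's argument buys is brevity and an immediate generalization to any natural differential operator built functorially from $G$-invariant data. What your argument buys is that it verifies concretely \emph{why} the descended operator is the intrinsic Dirac operator of the quotient family --- a point the paper's proof leaves implicit, relying on the reader to accept that the Dirac operator of a geometric Clifford family is determined by the data and that the data descends compatibly. Your observation that the $\phi^a\tau_a$ correction is purely horizontal in the $Q$-direction is the computational shadow of the conceptual fact that the Dirac operator only ``sees'' the $X$-fiber direction, where the twist by $\Phi$ does not act. Both proofs are sound; yours is a good sanity check of the abstract claim.
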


\begin{proof}
The vertical Dirac operator $\slashed D \in \Omega^0_X(\mathcal D(\E))$ is $G$-invariant because the action preserves the vertical metric $g^{X/S}$. This implies $p^\ast \slashed D \in \Omega^0_{Q\times X}(\mathcal D(\underline \E))$ is also $G$-invariant and it descends to $\slashed D_Q$.
\end{proof}

There is a projection \cite{MATHAI198685}, which we call the \emph{Mathai-Quillen map},
\begin{equation}\label{Mathai Quillen}
\exp(-\phi^a\iota_a): \Omega_{Q\times X} \raw (\Omega_{Q\times X})^{hor}
\end{equation}
onto horizontal forms on $Q\times X$, i.e. those that are annihilated by $\iota_a$ for all $e_a \in \g$. It restricts to a map on $G$-invariants
\begin{equation}
\Omega_X^G \xhookrightarrow{p^\ast} \Omega_{Q\times X}^G \xrightarrow{\exp(-\phi^a\iota_a)} (\Omega_{Q\times X})_{\bas}.
\end{equation}
We note that the vertical volume form for the geometric family $(\pi, g^{X/S}, H)$ is $G$-invariant and we identify it with a form $\nu^{X/S} \in (\Omega^n_X)^G$ using the horizontal distribution $H$.

\begin{lemma}\label{Q vert volume}
The image of $\nu^{X/S}$ under the Mathai-Quillen map is the vertical volume form $\nu^{X_Q/S_Q} \in (\Omega^n_{Q\times X})_{\bas}$ on the geometric family $(\pi_Q, g^{X/S}_Q, H_Q)$, i.e. 
\begin{equation}
\begin{split}
   \nu^{X_Q/S_Q} &= \exp(-\phi^a\iota_a)\nu^{X/S}\\ 
   &= \nu^{X/S} - \phi^a\iota_a\nu^{X/S} + \frac{1}{2}\phi^a\phi^b\iota_b\iota_a \nu^{X/S} - \cdots
\end{split}
\end{equation}
\end{lemma}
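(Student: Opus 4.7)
My plan is to exploit the characterization of basic forms on $Q\times X$ via restriction to a complementary distribution. Both sides of the claimed equality are degree-$n$ basic forms on $Q\times X$: the right-hand side is basic because $\exp(-\phi^a\iota_a)$ is the Mathai--Quillen projection from $G$-invariants onto basic forms and $p^*\nu^{X/S}$ is $G$-invariant (the $G$-action preserves the vertical metric, hence its vertical volume form), while the left-hand side $q^*\nu^{X_Q/S_Q}$ is basic by construction. So it suffices to identify them.

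I will work with the subspace $W := V \oplus H \oplus T(X/S) \subset T(Q\times X)$ assembled from the two horizontal distributions and the fiber of $\pi$. Since the infinitesimal generator $\xi_a = (\xi^Q_a, \xi^X_a)$ of the $G$-action on $Q\times X$ has a nonzero component $\xi^Q_a \in T(Q/M)$, which does not lie in $W$, and $\dim W + \dim G = \dim(Q\times X)$, the subspace $W$ is complementary to $\langle \xi_a\rangle$ inside $T(Q\times X)$. It follows from $G$-invariance together with $\iota_a$-annihilation that any basic form on $Q\times X$ is determined by its restriction to $W$.

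It then suffices to compare restrictions to $W$. The connection 1-forms $\phi^a$ vanish on $W$: on $V = \ker\Phi$ by definition, and on $TX$ because $\phi^a$ is pulled back from $Q$. Consequently every correction term in the Mathai--Quillen expansion carries at least one factor $\phi^{a_i}$ that vanishes on $W$, so $\exp(-\phi^a\iota_a) p^*\nu^{X/S}|_W = p^*\nu^{X/S}|_W$; this restriction equals $\nu^{X/S}$ on $T(X/S)$ and zero on $V\oplus H$ (since $\nu^{X/S}$ on $X$ vanishes on $H$, and $p_*V = 0$). On the other hand, $q_*$ maps $W$ isomorphically onto $TX_Q$ with $q_*(V\oplus H) = H_Q$ and $q_* T(X/S) = T(X_Q/S_Q)$, and the latter identification carries $g^{X/S}$ to $g^{X/S}_Q$. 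Hence $q^*\nu^{X_Q/S_Q}|_W$ also equals $\nu^{X/S}$ on $T(X/S)$ and vanishes on $V\oplus H$, whence the two basic forms coincide.

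The main obstacle I anticipate is pinning down the geometric picture cleanly: verifying that $W$ really is complementary to $\langle \xi_a\rangle$ (which uses that the $G$-action on $X$ is over $S$ and that $V$ is a $G$-invariant horizontal distribution), that $q_*$ carries each summand of $W$ to the intended subspace of $TX_Q$, and that the horizontal extension of $\nu^{X_Q/S_Q}$ via $H_Q$ pulls back under $q^*$ to a form whose $W$-restriction is precisely $\nu^{X/S}|_{T(X/S)}$. Once this setup is in place, the vanishing of the Mathai--Quillen corrections on $W$ and the final identification of restrictions are immediate.
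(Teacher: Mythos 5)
Your argument is correct and is essentially the paper's own proof, unpacked slightly: the paper verifies that $\exp(-\phi^a\iota_a)\nu^{X/S}$ restricts to the Riemannian volume form on fibers and is annihilated by contraction with $V\oplus H$, then invokes that these two properties characterize $\nu^{X_Q/S_Q}$; you make explicit that this characterization amounts to comparing restrictions to the complement $W = V\oplus H\oplus T(X/S)$ of the $\g$-orbit directions, on which all the Mathai--Quillen correction terms vanish because $\phi^a|_W = 0$.
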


\begin{proof}
The restriction to a fiber of the form $\exp(-\phi^a\iota_a)\nu^{X/S}$ agrees with the vertical Riemannian volume form. Moreover, for any $\xi \in V\oplus H$, $\iota_\xi \exp(-\phi^a\iota_a)\nu^{X/S} =0$. Since these two properties uniquely characterize $\nu^{X_Q/S_Q}$, this concludes the proof.
\end{proof}

\begin{proposition}
The mean curvature satisfies
\begin{equation}\label{result}
k_Q = k - \phi^a\iota_a k
\end{equation}
in $\Omega^1_{Q\times S}(\mathcal D(\underline \E))_{\basic}$.
\end{proposition}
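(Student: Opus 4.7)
The strategy is to invoke the uniqueness of the mean curvature. Equation~(\ref{mean curvature}), applied to the geometric family $(X_Q \xrightarrow{\pi_Q} S_Q, H_Q, g^{X/S}_Q)$ of Lemma \ref{geometric family lemma}, characterizes $k_Q$ as the unique element of $C^\infty(H_Q^*)$ satisfying $d\nu^{X_Q/S_Q} = k_Q \wedge \nu^{X_Q/S_Q} + \sum_i \langle T_Q, e_i\rangle \iota_{e_i}\nu^{X_Q/S_Q}$. It therefore suffices to verify that $k - \phi^a\iota_a k$ lies in $C^\infty(H_Q^*)$, i.e.\ is basic on $Q \times X$, and satisfies this equation. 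Basicness is immediate: $G$-invariance is inherited from $k$ and $\phi^a$, and $\iota_{\xi_b}(k - \phi^a \iota_a k) = \iota_b k - \delta_b^a \iota_a k + \phi^a \iota_a \iota_{\xi_b}k = 0$ using that $k$ is $G$-invariant.

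To check the equation, the plan is to differentiate $\nu^{X_Q/S_Q} = \exp(-\phi^a\iota_a)\nu^{X/S}$ from Lemma~\ref{Q vert volume}, applying Cartan's magic formula $[d,\iota_a] = \mathcal L_a$ and the structure equation $d\phi^a = \omega^a - \frac{1}{2}f^a_{bc}\phi^b\phi^c$, and then substituting the original mean curvature equation $d\nu^{X/S} = k \wedge \nu^{X/S} + \sum_i \langle T, e_i\rangle \iota_{e_i}\nu^{X/S}$ to eliminate $d\nu^{X/S}$. This produces an expansion of $d\nu^{X_Q/S_Q}$ in $k$, $T$, $\phi^a$, $\omega^a$, and iterated interior products $\iota_{a_1}\cdots\iota_{a_r}\nu^{X/S}$.

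Extracting the coefficient of $\nu^{X_Q/S_Q}$ (the top-vertical-degree part) is the pleasant step. Since $k$ is a 1-form, $\iota_b \iota_a k = 0$, so the only top-vertical contribution from $\exp(-\phi^a\iota_a)(k \wedge \nu^{X/S})$ comes from the first-order piece $\iota_a(k \wedge \nu^{X/S}) = (\iota_a k)\,\nu^{X/S} - k \wedge \iota_a\nu^{X/S}$. Reassembling the residual $k \wedge$ pieces against the remaining factors of $\exp(-\phi^a\iota_a)\nu^{X/S}$ produces exactly $(k - \phi^a \iota_a k) \wedge \nu^{X_Q/S_Q}$, as desired.

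The main obstacle is showing that all the remaining lower-vertical terms recombine into $\sum_i \langle T_Q, e_i\rangle \iota_{e_i}\nu^{X_Q/S_Q}$. This requires first identifying $T_Q$ explicitly from $H_Q = q_*(V \oplus H)$: brackets of horizontal lifts pick up contributions from the original Frobenius tensor $T$ (from $H$-$H$ brackets), from the curvature $\omega^a$ of $\Phi$ (from $V$-$V$ brackets, once the vertical components in $T(X/S)$ of the fundamental vector fields $\xi_a$ on $X$ are accounted for), and from mixed brackets. One must then match these terms against the $\omega^a \iota_a \nu^{X/S}$ contributions arising from $d\phi^a$ and the higher $\phi^{a_1}\cdots\phi^{a_r}\iota_{a_1}\cdots\iota_{a_r}(\langle T, e_i\rangle \iota_{e_i}\nu^{X/S})$ contributions generated by $\exp(-\phi^a\iota_a)$. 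This is a bookkeeping exercise—tedious but mechanical once $T_Q$ is in hand—after which the uniqueness of the mean curvature delivers $k_Q = k - \phi^a \iota_a k$.
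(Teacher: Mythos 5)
Your proof is correct in outline and proceeds from the right two ingredients (the characterizing equation~(\ref{mean curvature}) and Lemma~\ref{Q vert volume}), but it commits to substantially more work than is needed, and the paper's proof sidesteps exactly the step you flag as ``the main obstacle.''

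The efficiency gain you are missing is this: since the Borel quotient family $(X_Q \to S_Q, H_Q, g^{X/S}_Q)$ is already a geometric family (Lemma~\ref{geometric family lemma}), equation~(\ref{mean curvature}) is \emph{already known} to hold for $k_Q$, $T_Q$, $\nu^{X_Q/S_Q}$. You do not need to verify it; you only need to read off $k_Q$. Now observe that on the right-hand side $k_Q \wedge \nu^{X_Q/S_Q} + \sum_i \langle T_Q, e_i\rangle \iota_{e_i}\nu^{X_Q/S_Q}$, every term except $k_Q \wedge \nu^{X/S}$ has vertical degree strictly less than $n$ — the $T_Q$-terms because of the $\iota_{e_i}$, and the corrections $-k_Q\wedge \phi^a\iota_a\nu^{X/S} + \cdots$ inside $k_Q\wedge \nu^{X_Q/S_Q}$ for the same reason. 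The paper encodes this by declaring a form \emph{irrelevant} when $\iota_{e_1}\cdots\iota_{e_n}$ kills it, and then needs only the single identity
\begin{equation}
d\nu^{X_Q/S_Q} = (k - \phi^a\iota_a k)\wedge\nu^{X/S} + \text{irrelevant},
\end{equation}
which falls out in two lines from $\mathcal L_a\nu^{X/S}=0$ and $d\nu^{X/S} = k\wedge\nu^{X/S} + \text{irrelevant}$. Matching the top-vertical-degree part against $k_Q\wedge\nu^{X/S}$ gives the result, and the entire $T_Q$-side of the ledger never enters. Your plan to ``identify $T_Q$ explicitly from $H_Q = q_*(V\oplus H)$'' and then match all lower-vertical terms would re-derive the \emph{next} proposition in the paper as a subroutine of this one; it would work, but it is unnecessary here.

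Two smaller remarks. Your top-degree extraction in the third paragraph is essentially the paper's relevant-terms step, so the core idea is present — you just haven't noticed that it closes the argument by itself. And in the horizontality check, $\phi^a\iota_a\iota_{\xi_b}k$ vanishes simply because $k$ is a $1$-form (you contract twice), not because $k$ is $G$-invariant; $G$-invariance is what you need for the Lie-derivative half of basicness, not the contraction half.
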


\begin{proof}
For the purposes of this proof only, we will call a form $\omega \in \Omega_{Q\times X}$ \emph{irrelevant} if 
\begin{equation}
\iota_{e_1}\cdots\iota_{e_n} \omega = 0
\end{equation}
for $(e_i)$ an orthonormal frame of the vertical tangent bundle, i.e. $\omega$ is irrelevant if it does not contain a factor of $\nu^{X/S}$.

We now apply the previous lemma. The equation characterizing $k_Q$ is
\begin{equation}\label{dnu}
\begin{split}
   d\nu^{X_Q/S_Q} &
   = k_Q \wedge \nu^{X_Q/S_Q} + \sum_{i =1}^n \langle T_Q, e_i\rangle \iota_{e_i}\nu^{X_Q/S_Q}\\
   &= k_Q \wedge \nu^{X/S} + \text{irrelevant terms}
\end{split}
\end{equation}

The left hand side of (\ref{dnu}) is
\begin{equation}
\begin{split}
d\nu^{X_Q/S_Q} &= d\nu^{X/S} + \phi^a d\iota_a \nu^{X/S} + \text{irrelevant terms}\\
&= k\wedge \nu^{X/S} - \phi^a\iota_a d\nu^{X/S} + \text{irrelevant terms}\\
&= (k - \phi^a\iota_a k) \wedge \nu^{X/S} + \text{irrelevant terms}
\end{split}
\end{equation}
where we used $G$-invariance of $\nu^{X/S}$ to say that $\mathcal L_a \nu^{X/S} = [d, \iota_a]\nu^{X/S} = 0$ in the second equality. Matching the relevant terms gives
\begin{equation}\label{almost}
k_Q = k - \phi^a\iota_a k
\end{equation}
as forms in $(\Omega_{Q\times X})_{\bas}$. Let $(\Omega^{H_Q}_{Q\times X})_{\bas}$ denote those forms annihilated by $\iota_\zeta$ where $\zeta$ is a section of $T(X/S)$ and let $\pi_\ast \underline \R$ denote the bundle on $Q\times S$ whose preimage over a point in $Q\times S$ is the space of smooth functions on the fiber. Under the identification 
\begin{equation}\label{final identification}
(\Omega^{H_Q}_{Q\times X})_{\bas} \simeq \Omega_{Q\times S}(\pi_\ast \underline \R)_{\bas} \subset \Omega_{Q\times S}(\mathcal D(\underline\E))_{\bas}
\end{equation}
(\ref{almost}) becomes (\ref{result}).
\end{proof}

\begin{proposition}
The Frobenius tensor satisfies
\begin{equation}\label{frobenius relation}
T_Q = T - \phi^a \iota_a T+ \frac{1}{2}\phi^a \phi^b \iota_b\iota_a T - \omega^a P\xi_a
\end{equation}
where $P$ is (\ref{split exact}) and $\xi_a$ is the vector field on $X$ generated by the action of $e_a \in \g$.
\end{proposition}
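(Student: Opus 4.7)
The plan is to treat both sides as basic elements of $\Omega^2_{Q\times X}(\underline{T(X/S)})_{\bas}$ and check they agree there. Concretely I will (a) confirm the right hand side is basic, (b) evaluate both sides on horizontal lifts of a spanning set of horizontal vector fields on $X_Q$, and (c) note that both vanish on vertical arguments. Since a basic form is determined by its values on $V \oplus H \oplus T(X/S)$, this will establish the identity.

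For (a), the first three terms assemble into $\exp(-\phi^a \iota_a) T$, which is horizontal by the Mathai--Quillen projection (\ref{Mathai Quillen}); the term $\omega^a P \xi_a$ is horizontal because $\iota_{\chi_b}\omega^a = 0$ (horizontality of the principal curvature on $Q$) and $\omega^a$ is pulled back from $Q$ so $\iota_{\xi_b}\omega^a = 0$ on $Q\times X$. $G$-invariance is inherited from the $G$-invariance of $T$, $\phi^a$, $\omega^a$ together with the equivariance of $P\xi_a$ under the adjoint action.

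For (b), I pick $G$-invariant horizontal lifts $\tilde\eta = v + \eta$ and $\tilde\zeta = w + \zeta$ with $v, w$ drawn from a $G$-invariant frame of $V \subset TQ$ and $\eta, \zeta$ from a $G$-invariant frame of $H \subset TX$; such pairs span $H_Q$ after descent. Cross-factor brackets on $Q\times X$ vanish, so $[\tilde\eta, \tilde\zeta] = [v, w] + [\eta, \zeta]$. The $X$-bracket contributes $-T(\eta, \zeta)$ in the $T(X/S)$-direction by the definition of the Frobenius tensor, while for horizontal $v, w \in V$ the structural equation gives $\Phi([v, w]) = -\omega(v, w)$, hence the $Q$-bracket has vertical component $-\omega^a(v, w)\chi_a$. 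The crucial step is descending to $X_Q$: on $Q\times X$ one works modulo the fundamental vector field $\zeta_a := \chi_a + \xi_a$ of $e_a$, so $\chi_a \equiv -\xi_a = -\big((1-P)\xi_a + P\xi_a\big)$ modulo $\zeta_a$, and projecting onto the vertical $T(X/S)$ produces an additional $\omega^a(v, w) P\xi_a$. Thus $T_Q(\tilde\eta_Q, \tilde\zeta_Q) = T(\eta, \zeta) - \omega^a(v, w) P\xi_a$. The right hand side of the proposition evaluated on $(v + \eta, w + \zeta)$ gives the same value: the $\phi$-corrections vanish because $\phi^a(v) = \phi^a(w) = 0$, while $T$ and $\omega^a$ see only the $X$- and $Q$-components respectively.

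For (c), both sides vanish whenever an argument lies in $T(X/S)$: $T_Q$ does so by the definition of the Frobenius tensor on $X_Q$, and every term of the right hand side does because $T$ vanishes on vertical arguments, and $\phi^a$, $\omega^a$ annihilate vectors in $T(X/S)$. I expect the descent bookkeeping of step (b) to be the main obstacle: the decomposition $\xi_a = (1 - P)\xi_a + P\xi_a$ distributes the $\chi_a$-correction between $H_Q$ and $T(X/S)$, and isolating just the vertical piece is what produces the last term $-\omega^a P\xi_a$; the horizontal piece $-\omega^a(v,w)(1-P)\xi_a$ emerges simultaneously as a correction to the horizontal direction on $X_Q$ but is invisible to the vertical projection, and keeping these two contributions separate is the key subtlety.
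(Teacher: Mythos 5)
Your proof is correct, and it takes a genuinely different (and in some ways cleaner) route than the paper's. The paper evaluates $q^\ast T_Q$ on arbitrary $G$-invariant lifts $\eta, \zeta$ on $Q\times X$, decomposes each into four pieces along $\dot\rho(\g) \oplus V \oplus H \oplus T(X/S)$, and expands the bracket; in that computation the $\phi$-correction terms $-\phi^a\iota_a T$ and $\tfrac{1}{2}\phi^a\phi^b\iota_b\iota_a T$ emerge because the horizontal piece $\eta_H = P^H_{T(X/S)}(\eta_X - \dot\rho_X\Phi(\eta))$ carries a $\Phi(\eta)$-dependence. You instead exploit the freedom to choose horizontal representatives $v+\eta$ with $v\in V$, $\eta\in H$: there $\Phi$ vanishes identically, so those terms drop out on the nose, and the whole calculation collapses to just the $T(\eta,\zeta)$ term and the $-\omega^a(v,w)P\xi_a$ term coming from the structure equation together with the $\chi_a\equiv-\xi_a\ (\mathrm{mod}\ \zeta_a)$ identification. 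The price for this economy is that you must argue \emph{separately} that the right-hand side is basic, since you are no longer deriving it but verifying it; you correctly handle this via the Mathai--Quillen projection $\exp(-\phi^a\iota_a)$ for the first three terms and the horizontality of $\omega^a$ for the last. The paper gets basicness for free because it computes $q^\ast T_Q$ directly, which is basic by construction. Both arguments are valid; yours is shorter and pushes the $\phi$-bookkeeping into a standard projector, while the paper's is constructive and produces the answer term-by-term without needing to guess the formula first. Two minor points of hygiene: your notation $\chi_a$ for the fundamental vector field on $Q$ collides with the paper's $\chi^a\in\Kos^2(\g^\ast)$ from (\ref{chi}), so it would be worth renaming; and the statement that ``$G$-invariance is inherited from the $G$-invariance of $\phi^a$, $\omega^a$'' is a little loose (individually these transform by the adjoint representation), but the contracted combinations you actually use are indeed invariant.
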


\begin{proof}
We first set some notation. Given a split short exact sequence of vector bundles
\begin{equation}
0 \raw W \mono E \stackrel[\iota]{j}{\rightleftarrows} V \raw 0
\end{equation}
we denote by $P^V_W:= \iota \circ j \in \End(E)$ the projection operator onto $\iota(V)$ with kernel $W$.

By definition,
\begin{equation}
T_Q := P^{(T(X_Q/S_Q)}_{H_Q}[P^{H_Q}_{T(X_Q/S_Q)} -, P^{H_Q}_{T(X_Q/S_Q)}-]
\end{equation}
which is a tensor, i.e. an element of $\Omega^2_{X_Q}(T(X_Q/S_Q))$. To show (\ref{frobenius relation}), we will evaluate $q^\ast T_Q \in \Omega^2_{Q\times X}(\underline{T(X/S)})_{\bas}$ pointwise on vectors $\eta, \zeta \in T(Q\times X)$. By tensoriality, this value does not change regardless of how we extend $\eta, \zeta$ to global vector fields and we will make use of this freedom when doing the calculation.

Now we observe that pointwise
\begin{equation}\label{commute projector 1}
P^{H_Q}_{T(X_Q/S_Q)} \circ q_\ast = q _\ast \circ P^{V\oplus H}_{\dot\rho(\g) \oplus T(X/S)}
\end{equation}
and
\begin{equation}\label{commute projector 2}
P^{T(X_Q/S_Q)}_{H_Q} \circ q_\ast = q _\ast \circ P_{\dot\rho(\g) \oplus V\oplus H}^{T(X/S)}.
\end{equation}
Here $\dot\rho(\g) \subset T(Q\times X)$ is the subbundle generated the action of $\g$ on $Q\times X$. We will use $\dot\rho_X: \g \raw C^\infty(TX)$ and $\dot\rho_Q: \g \raw C^\infty(TQ)$ to denote the action of $\g$ on $X$ and $Q$.

We now use our freedom and extend $\eta, \zeta$ to $G$-invariant vector fields on $Q\times X$. Then
\begin{equation}\label{calculation}
\begin{split}
(q^*T_Q)(\eta, \zeta) &= q^* P^{T(X_Q/S_Q)}_{H_Q}[P^{H_Q}_{T(X_Q/S_Q)} q_\ast \eta, P^{H_Q}_{T(X_Q/S_Q)}q_\ast \zeta]\\
&= q^*P^{T(X_Q/S_Q)}_{H_Q}q_*[P^{V\oplus H}_{\dot\rho(\g) \oplus T(X/S)}\eta, P^{V\oplus H}_{\dot\rho(\g) \oplus T(X/S)}\zeta]\\
&= P_{\dot\rho(\g) \oplus V\oplus H}^{T(X/S)}[P^{V\oplus H}_{\dot\rho(\g) \oplus T(X/S)}\eta, P^{V\oplus H}_{\dot\rho(\g) \oplus T(X/S)}\zeta]
\end{split}
\end{equation}
where (\ref{commute projector 1}) and (\ref{commute projector 2}) were used in the second and third equalities.

We note that $\eta$ has a decomposition
\begin{equation}
\eta = \eta_Q + \eta_X
\end{equation}
into components in $TQ\oplus TX$, which has a further decomposition
\begin{equation}
\eta = \underbrace{\left(\dot\rho_Q\Phi(\eta) + \dot\rho_X\Phi(\eta)\right)}_{\in \dot\rho(\g)} 
+ \underbrace{\left(\eta_Q - \dot\rho_Q\Phi(\eta)\right)}_{\eta_V \in V} 
+ \underbrace{P^H_{T(X/S)}\left(\eta_X - \dot\rho_X\Phi(\eta)\right)}_{\eta_H \in H} 
+ \underbrace{P^{T(X/S)}_H\left(\eta_X - \dot\rho_X\Phi(\eta)\right)}_{\in T(X/S)}.
\end{equation}
There is a similar decomposition for $\zeta$. 

Continuing (\ref{calculation}),
\begin{equation}
\begin{split}
(q^*T_Q)(\eta,\zeta) &= P_{\dot\rho(\g) \oplus V\oplus H}^{T(X/S)}[\eta_H + \eta_V, \zeta_H + \zeta_V]\\
&= P_{\dot\rho(\g) \oplus V\oplus H}^{T(X/S)}
\left([\eta_H, \zeta_H] 
+ \underbrace{[\eta_H, \zeta_V] + [\eta_V, \zeta_H]}_{\in V\oplus H} 
+ [\eta_V, \zeta_V]\right)\\
&= \underbrace{P^{T(X/S)}_H[P^H_{T(X/S)} \eta_X, P^H_{T(X/S)} \zeta_X ]}_{\iota_\zeta\iota_\eta T}\\
&\hspace{.4cm} - \underbrace{\left(P^{T(X/S)}_H[P^H_{T(X/S)}\dot\rho_X\Phi(\eta), P^H_{T(X/S)}\zeta_X]
+ P^{T(X/S)}_H[ P^H_{T(X/S)}\eta_X, P^H_{T(X/S)}\dot\rho_X\Phi(\zeta)]\right)}_{\iota_\zeta\iota_\eta \phi^a\iota_a T}\\
&\hspace{.4cm} + \underbrace{P^{T(X/S)}_H[ P^H_{T(X/S)}\dot\rho_X\Phi(\eta), P^H_{T(X/S)}\dot\rho_X\Phi(\zeta)]}_{\frac{1}{2}\iota_\xi\iota_\eta\phi^a\phi^b\iota_b\iota_a T}\\
&\hspace{.4cm} - \underbrace{P^{T(X/S)}_H\dot\rho_X\omega(\eta, \zeta)}_{\iota_\zeta\iota_\eta\omega^a P \xi_a}\\
\end{split}
\end{equation}
\end{proof}
Applying the Clifford map (\ref{clifford map}) and the identification (\ref{final identification}), we get
\begin{corollary} There is an equality
\begin{equation}
c(T_Q) = T - \phi^a \iota_a c(T)+ \frac{1}{2}\phi^a \phi^b \iota_b\iota_a c(T) - \omega^a c(P\xi_a)
\end{equation}
of forms in $\Omega_{Q\times S}(\mathcal D(\underline \E))_{\bas}$
\end{corollary}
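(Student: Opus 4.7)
The plan is to deduce this corollary directly from the preceding proposition on $T_Q$ by post-composition with the Clifford map. Specifically, the proposition establishes the identity
\begin{equation*}
T_Q = T - \phi^a \iota_a T + \tfrac{1}{2}\phi^a \phi^b \iota_b \iota_a T - \omega^a P\xi_a
\end{equation*}
as forms in $\Omega^2_{Q\times X}(\underline{T(X/S)})_{\bas}$. Both sides take values in sections of $\underline{T(X/S)}$, which sits inside $\Cl(\underline{T(X/S)}) = \underline{\Cl(T(X/S))}$, so we may apply the Clifford map (\ref{clifford map}) pointwise on $Q \times X$ to pass from the $\Cl(T(X/S))$-valued side to the $\End(\underline{\mathscr E})$-valued side.

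The key observation that makes this pass through cleanly is that the Clifford map $c$ acts only on the ``coefficient'' factor and is $\Omega_{Q\times X}$-linear: in particular it commutes with wedging by the horizontal 1-forms $\phi^a$ and by the horizontal 2-forms $\omega^a$, and it commutes with the contractions $\iota_a$ (which act on the form part, not on the $T(X/S)$ part). Consequently applying $c$ term by term yields
\begin{equation*}
c(T_Q) = c(T) - \phi^a \iota_a c(T) + \tfrac{1}{2}\phi^a\phi^b\iota_b\iota_a c(T) - \omega^a c(P\xi_a),
\end{equation*}
where the last term is well-defined because $P\xi_a$ is a section of $T(X/S) \subset \Cl(T(X/S))$.

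Finally, one invokes the identification (\ref{final identification}),
\begin{equation*}
(\Omega^{H_Q}_{Q\times X})_{\bas} \simeq \Omega_{Q\times S}(\pi_\ast \underline{\mathbb{R}})_{\bas} \subset \Omega_{Q\times S}(\mathcal D(\underline{\mathscr E}))_{\bas},
\end{equation*}
to reinterpret both sides as forms on $Q\times S$ valued in vertical differential operators of order zero on $\underline{\mathscr E}$, exactly as demanded by the statement. There is no real obstacle here: the only minor subtlety is verifying that each operation (contraction by $\iota_a$, wedge with $\phi^a$ or $\omega^a$, restriction of $c(T_Q)$ under the identification above) makes sense on the $\End(\underline{\mathscr E})$-valued side and commutes with $c$, and this is immediate from the $\Omega_{Q\times X}$-linearity of the Clifford map.
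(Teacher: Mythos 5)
Your proof is correct and takes essentially the same approach as the paper, which proves this corollary in a single sentence by ``Applying the Clifford map (\ref{clifford map}) and the identification (\ref{final identification})''. You have simply made explicit the $\Omega_{Q\times X}$-linearity of $c$ (so it commutes with wedging by $\phi^a,\omega^a$ and with the contractions $\iota_a$) that the paper leaves implicit.
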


\subsubsection{The Equivariant Bismut Superconnection}

We define the equivariant Bismut superconnection
to be the derivation 
\begin{equation}
\mathbb B_G: \Omega_{(S_G)_\nabla}((\sH_G)_\nabla)\raw \Omega_{(S_G)_\nabla}((\sH_G)_\nabla)
\end{equation} over the algebra $\Omega_{(S_G)_\nabla} \simeq \left(\overline\Kos(\g^\ast)\otimes \Omega_S\right)_{\bas}$ given by
\begin{equation}
\mathbb B_G := \slashed D + \left(\tilde \nabla^\E_G + \frac{1}{2}k_G\right) - \frac{1}{4}c(T)_G
\end{equation}
where the terms are given as follows:
\begin{itemize}
 \item $\slashed D \in \Omega^0_S(\mathcal D(\E))$ is the vertical Dirac operator. It is $G$-equivariant and defines a degree 0 map on $\Omega_{(S_G)_\nabla}((\sH_G)_\nabla) := (\Kos(\g^*)\otimes \Omega_S(\sH))_{\bas}$.
 \item $\tilde \nabla^\E_G$ is defined as
\begin{equation}
\tilde \nabla^\E_G := d_K + \tilde\nabla^\E - \theta^a\tau_a
\end{equation}
where $\tilde \nabla^\E$ is (\ref{tilde nabla}) and $\tau_a$ is (\ref{endomorphism})
\item $k_G \in (\Kos(\g^*) \otimes \Omega_S(\mathcal D(\E)))^{\bullet = 1}_{\bas}$ is
\begin{equation}
k_G := k - \theta^a\iota_a k 
\end{equation}
\item $c(T)_G \in (\Kos(\g^*) \otimes \Omega_S(\mathcal D(\E)))_{\bas}^{\bullet =2}$ is
\begin{equation}
c(T)_G := c(T) - \theta^a \iota_a c(T) + \frac{1}{2}\theta^a\theta^b\iota_b\iota_a c(T) - \chi^a c(P(\xi_a))
\end{equation}
where $\chi^a$ is (\ref{chi}).
\end{itemize}

\begin{proposition}\label{pullback supercurvature}
The curvature 
\begin{equation}
\mathcal F_G := (\mathbb B_G)^2
\end{equation}
is a form in $(\Kos(\g^\ast)\otimes \Omega_S(\mathcal D(\E)))_{\basic}$ and it satisfies
\begin{equation}
f_S^\ast \mathcal F_G = \mathcal F_Q
\end{equation}
\end{proposition}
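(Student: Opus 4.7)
The plan is to imitate the strategy of Lemma \ref{pullback equivariant curvature}. I extend $\mathbb B_G$ to a derivation $\bar{\mathbb B}_G$ on all of $\Kos(\g^*) \otimes \Omega_S(\sH)$ using the same defining formula, and similarly extend $\mathbb B_Q$ to a derivation $\bar{\mathbb B}_Q$ on all of $\Omega_{Q\times S}(\underline \sH)$. The central claim is that the diagram
\begin{equation}
\begin{tikzcd}
\Kos(\g^*)\otimes \Omega_S(\sH) \arrow[r, "\bar{\mathbb B}_G"] \arrow[d, "f_S^*"] & \Kos(\g^*)\otimes \Omega_S(\sH) \arrow[d, "f_S^*"] \\
\Omega_{Q\times S}(\underline \sH) \arrow[r, "\bar{\mathbb B}_Q"] & \Omega_{Q\times S}(\underline \sH)
\end{tikzcd}
\end{equation}
commutes, where $f_S^*$ denotes the extension of the basic pullback (\ref{pullback sections}) to all forms. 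Squaring this intertwining gives $f_S^* \bar{\mathbb B}_G^2 = \bar{\mathbb B}_Q^2 f_S^*$, from which the statement about $\mathcal F_G$ will be extracted.

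Commutativity is checked term by term. The Dirac operator $\slashed D$ is $G$-invariant, so $p^*\slashed D = \slashed D_Q$ and it commutes with $f_S^*$ tautologically. For the connection part, using that $f_S^*$ is a morphism of differential graded algebras with $f_S^*\circ d_K = d_Q \circ f_S^*$ and $f_S^*(\theta^a) = \phi^a$, one has
\begin{equation}
f_S^*(d_K + \tilde\nabla^\E - \theta^a\tau_a) = (d_Q + \tilde\nabla^\E - \phi^a\tau_a)\circ f_S^*,
\end{equation}
which is precisely the extension of $\tilde\nabla^\E_Q$ from Lemma \ref{Q covariant derivative}. The identities $f_S^*(k_G) = k_Q$ and $f_S^*(c(T)_G) = c(T_Q)$ are the content of the previous proposition on the mean curvature and the corollary on the Frobenius tensor, together with the relation $f_S^*(\chi^a) = \omega^a$.

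To extract the statement about the curvature as a form, I evaluate the squared intertwining on an element $\sigma \in \Omega^0_S(\sH)$: the left side yields $f_S^*(\mathcal F_G) \cdot p^*\sigma$ and the right side yields $\mathcal F_Q \cdot p^*\sigma$. As in Lemma \ref{pullback equivariant curvature}, the sections $p^*\sigma$ generate $\underline \sH$ over $\Omega^0_{Q\times S}$, so $f_S^* \mathcal F_G = \mathcal F_Q$. Basicness of $\mathcal F_G$ follows from the fact that $\mathbb B_G$ was constructed as an operator on the basic subcomplex, so $\mathbb B_G^2$ preserves basic forms; combined with the $\Omega_{(S_G)_\nabla}$-linearity characteristic of the curvature of a superconnection, this forces $\mathcal F_G$ to act by wedge product with an element of $(\Kos(\g^*) \otimes \Omega_S(\mathcal D(\E)))_{\bas}$.

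The main technical obstacle is pure bookkeeping: verifying that $f_S^*$ intertwines $d_K$ with $d_Q$ on the $\Kos(\g^*)$ factor. On the generator $\theta^a$ this amounts to the Maurer–Cartan structure equation $d\phi^a = \omega^a - \tfrac{1}{2}f^a_{bc}\phi^b\phi^c$ agreeing with $f_S^*(z^a) = f_S^*(\chi^a - \tfrac{1}{2}f^a_{bc}\theta^b\theta^c)$, a compatibility already built into the construction of the Chern–Weil map in Section \ref{Section 2}. Once this is in hand, the three previously computed pullback identities for $\slashed D$, $k_G$, and $c(T)_G$ combine mechanically to yield the intertwining.
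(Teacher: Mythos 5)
Your proposal is correct and follows essentially the same route as the paper: both extend $\mathbb B_G$ and $\mathbb B_Q$ to non-basic forms, establish that the extensions intertwine under $f_S^\ast$, square, and then recover $\mathcal F_Q = f_S^\ast \mathcal F_G$ from the fact that $p^\ast\sigma$ for $\sigma\in\Omega^0_S(\sH)$ generate $\underline\sH$ over $\Omega^0_{Q\times S}$. The only difference is that you spell out the term-by-term verification of the commuting square (in particular the compatibility $f_S^\ast\circ d_K = d_Q\circ f_S^\ast$ via the structure equation $d\phi^a = \omega^a - \tfrac12 f^a_{bc}\phi^b\phi^c$), which the paper leaves as ``one checks''.
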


\begin{proof}
The map 
\begin{equation}
\mathbb B_G^2: (\Kos(\g^*) \otimes \Omega_S(\sH))_{\bas} \raw (\Kos(\g^*) \otimes \Omega_S(\sH))_{\bas}
\end{equation}
supercommutes with exterior multiplication by forms in $\Omega_{(S_G)_\nabla}$, and therefore defines an element of $(\Kos(\g^\ast)\otimes \Omega_S(\mathcal D(\E)))_{\basic}$.

Let $\overline{\mathbb B}_Q: \Omega_{Q\times S}(\underline \sH) \raw \Omega_{Q\times S}(\underline \sH)$ be the extension of $\mathbb B_Q$  using  formula (\ref{Q bismut}) with $t=1$. Similarly, let $\overline{\mathbb B}_G$ be the extension of $\mathbb B_G$ to $\overline \Kos(\g^*) \otimes \Omega_S(\sH)$. Then $\overline{\mathbb B}_G^2: \overline \Kos(\g^*)\otimes \Omega_S(\sH) \raw \overline \Kos(\g^*)\otimes \Omega_S(\sH)$. One checks that

\begin{equation}
\begin{tikzcd}
   \overline \Kos(\g^\ast) \otimes \Omega_S(\sH) \arrow[r, "\overline{\mathbb B}_G"] \arrow[d, "f_S^\ast"] & \overline\Kos(\g^\ast) \otimes \Omega_S(\sH) \arrow[d, "f_S^\ast"]\\
   \Omega_{Q \times S}(\underline \sH) \arrow[r, "\overline{\mathbb B}_Q"] & \Omega_{Q\times S}(\underline \sH)
\end{tikzcd}
\end{equation}
commutes, which implies that
\begin{equation}
\begin{tikzcd}
   \Omega^0_S(\sH) \arrow[r, "\overline{\mathbb B}_G^2"] \arrow[d, "f_S^\ast "] & \overline \Kos(\g^\ast) \otimes \Omega_X(E) \arrow[d, "f_S^*"]\\
   \Omega^0_{Q\times S}(\underline \sH) \arrow[r, "\overline{\mathbb B}_Q^2"] & \Omega_{Q\times S}(\underline \sH)
\end{tikzcd}
\end{equation}
commutes.

Since sections of $\underline \sH \simeq p^\ast \sH$ are generated over the ring of functions on $Q\times S$ by $p^\ast\sigma$ for $\sigma \in \Omega^0_S(\sH)$, we see that $\overline{\mathbb B}_Q^2 = f_S^\ast \overline{\mathbb B}_G^2$ which implies $\mathcal F_Q = f_S^\ast \mathcal F_G$.
\end{proof}

We can write
\begin{equation}
\mathcal F_G = \slashed D^2 + \mathcal F^+_G
\end{equation}
where $\mathcal F^+_G \in (\overline \Kos(\g^*) \otimes \mathcal D(\E))_{\bas}$ is a form with terms of degree between 1 and 4,  valued in vertical differential operators of order 1.

As in (\ref{Volterra}) we define the heat operator of the equivariant Bismut superconnection as the Volterra series

\begin{equation}\label{Volterra2}
   e^{-t\mathcal F_G} := e^{-t\slashed D^2} + \sum_{k\geq 1} (-t)^k I^{(k)}_G
\end{equation}
where 
\begin{equation}I^{(k)}_G := \int_{\Delta_k} e^{-t\sigma_0\slashed D^2} \mathcal F^+_G e^{-t\sigma_1 \slashed D^2}\cdots \mathcal F^+_G e^{-t\sigma_k \slashed D^2}d\sigma.
\end{equation} 

A notable difference to (\ref{Volterra}) is that (\ref{Volterra2}) defines a form of unbounded degree on $(S_G)_\nabla$ with coefficients in vertical operators on $\E$. However, we have the following analogous statements

\begin{proposition}
The operators $I_G^{(k)}$ are forms of degree $\geq k$ on $(S_G)_\nabla$ valued in vertical smoothing operators, i.e. $I^{(k)}_G \in (\overline \Kos(\g^*) \otimes \Omega_S(\mathcal K(\E)))_{\bas}$.
\end{proposition}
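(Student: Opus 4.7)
The plan is to mimic the non-equivariant argument in Proposition \ref{smoothing} by reducing each term of the Volterra integrand to an instance of Lemma \ref{smoothing lemma}, with the Koszul factor handled multiplicatively. The key structural observation is that $\mathcal F_G^+$ has total form degree bounded between $1$ and $4$, so in each of those degrees the relevant graded piece of $\overline\Kos(\g^*)\otimes\Omega_S$ is finite over $C^\infty(S)$. This allows me to decompose
\begin{equation*}
\mathcal F_G^+ = \sum_{j}\alpha_j\otimes D_j
\end{equation*}
as a finite sum, with $\alpha_j\in(\overline\Kos(\g^*)\otimes\Omega_S)^{\geq 1}$ a form of total degree at least one and $D_j\in\Gamma(\mathcal D(\E))$ a smooth family of vertical differential operators.

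Next I would substitute this decomposition into $I_G^{(k)}$ and, using that the $\alpha_j$ commute (with signs determined by parity) with the heat kernels $e^{-t\sigma\slashed D^2}$ and with each $D_{j_i}$, reorganize the integrand so that all form factors sit to the left:
\begin{equation*}
I_G^{(k)} = \sum_{j_1,\ldots,j_k}\pm\,\alpha_{j_1}\cdots\alpha_{j_k}\otimes\int_{\Delta_k}e^{-t\sigma_0\slashed D^2}D_{j_1}e^{-t\sigma_1\slashed D^2}\cdots D_{j_k}e^{-t\sigma_k\slashed D^2}\,d\sigma.
\end{equation*}
The operator-valued integral in each summand is of exactly the form handled by Lemma \ref{smoothing lemma}, so it is a section of $\mathcal K(\E)$. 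Since each $\alpha_{j_i}$ has total degree $\geq 1$, the product $\alpha_{j_1}\cdots\alpha_{j_k}$ has total degree $\geq k$, which gives the degree bound.

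Finally I would check basicness. The vertical Dirac operator $\slashed D$ is $G$-invariant (the $G$-action preserves the geometric data), so the heat kernels $e^{-t\sigma\slashed D^2}$ are $G$-invariant and lie trivially in the basic subcomplex, being concentrated in total degree zero. By Proposition \ref{pullback supercurvature}, $\mathcal F_G^+$ is basic as well. Since $\iota_a$ and $\mathcal L_a$ act as (super)derivations on $\overline\Kos(\g^*)\otimes\Omega_S(\mathcal K(\E))$, the integrand is a product of basic elements hence basic, and integration over $\Delta_k$ preserves this property.

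The main delicacy is the sign and ordering bookkeeping that justifies pulling the $\alpha_j$ factors past the vertical operators: one must separate the form-degree grading (living in the Koszul and $\Omega_S$ factors) from the operator composition (acting on $\sH$), and verify that these commute up to Koszul sign. Once this is laid out cleanly, the rest is a direct term-by-term reduction to Lemma \ref{smoothing lemma} together with the finite-dimensionality of the Koszul--base piece of $\mathcal F_G^+$ in each bounded total degree.
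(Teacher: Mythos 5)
Your proposal is correct and follows the same strategy as the paper's (much terser) proof: since $\mathcal F_G^+$ is valued in $\Gamma(\mathcal D(\E))$, reduce termwise to Lemma \ref{smoothing lemma} for the smoothing-kernel claim, and then observe that $I^{(k)}_G$ inherits horizontality and $G$-invariance from the basicness of $\mathcal F_G^+$ and the $G$-invariance of $\slashed D$. You have simply made explicit the finite decomposition and the sign/commutation bookkeeping that the paper's one-line argument leaves implicit.
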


\begin{proof}
The form $\mathcal F^+_G$ is valued in sections of $\mathcal D(\E)$ so applying Lemma (\ref{smoothing lemma}) to each term shows that $I^{(k)}_G$ is valued in sections of $\mathcal K(\E)$. It is clear that $I^{(k)}_G$ is horizontal and $G$-invariant. 
\end{proof}

\begin{corollary}
For $t>0$, the heat operator is an unbounded form valued in vertical smoothing operators, i.e. $e^{-t\mathcal F_G} \in (\overline \Kos(\g^*) \otimes \Omega_S(\mathcal K(\E)))_{\bas}$.
\end{corollary}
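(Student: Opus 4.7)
The plan is to view the corollary as a direct consequence of the preceding proposition together with the fact, recorded in Remark \ref{dirac laplacian}, that the heat kernel $e^{-t\slashed D^2}$ of the Dirac Laplacian is a section of $\mathcal K(\E)$, hence trivially a basic element of $(\overline\Kos(\g^*) \otimes \Omega_S(\mathcal K(\E)))_{\bas}$ of Koszul-degree $0$. The only substantive point beyond these two inputs is to explain in what sense the a priori infinite Volterra series (\ref{Volterra2}) defines an element of the completion.

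First I would note that the preceding proposition shows $I^{(k)}_G \in (\overline\Kos(\g^*) \otimes \Omega_S(\mathcal K(\E)))_{\bas}$ and, crucially, that $I^{(k)}_G$ has total degree at least $k$. In contrast to the non-equivariant case, where $I^{(k)}$ vanishes once $k$ exceeds $\dim S$ because $\Lambda T^\ast S$ is finite-dimensional, here the presence of the completed Koszul factor $\overline\Kos(\g^\ast) = \Lambda \g^\ast \otimes \overline\Sym(\g^\ast)$ allows non-zero contributions in arbitrarily high degree through the polynomial generators $z^a$. This is why the equivariant heat operator lives in the completion rather than in the ordinary de Rham complex of $(S_G)_\nabla$.

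Next I would check convergence in the inverse limit. Let $\pi_N$ denote the projection onto
$$\big(\overline\Kos(\g^*) / \overline\Kos^{\geq N+1}(\g^*)\big) \otimes \Omega_S(\mathcal K(\E)).$$
Since $I^{(k)}_G$ has total degree $\geq k$, we have $\pi_N(I^{(k)}_G) = 0$ for $k > N$. Therefore the partial sums of (\ref{Volterra2}) stabilize in each finite quotient, and by the definition of $\overline\Kos(\g^*)$ as an inverse limit the formal series converges to a well-defined element of $(\overline\Kos(\g^*)\otimes \Omega_S(\mathcal K(\E)))_{\bas}$. Basicness is preserved at every finite stage since each $I^{(k)}_G$ and $e^{-t\slashed D^2}$ is basic, and the inverse limit of subcomplexes of basic elements is the subcomplex of basic elements of the limit.

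There is no significant obstacle here beyond the previous proposition; the lone conceptual subtlety is the need to pass to the completion to accommodate the non-terminating series, and this is handled cleanly by the degree bound $\deg I^{(k)}_G \geq k$ together with the definition of $\overline\Kos(\g^*)$ as a projective limit.
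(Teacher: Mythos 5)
The paper states this corollary without proof, leaving it as an immediate consequence of the preceding proposition; your proof correctly fills in the convergence argument that the paper leaves implicit, and your overall approach matches what the paper intends.

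One small imprecision worth correcting: you assert that $\pi_N(I^{(k)}_G) = 0$ for $k > N$, citing the bound $\deg I^{(k)}_G \geq k$. But the projection $\pi_N$ truncates only the Koszul factor $\overline\Kos(\g^\ast)$, not the $\Omega_S$ factor, whereas the degree bound from the proposition is on \emph{total} degree (Koszul degree plus form degree on $S$). A term of total degree $\geq k$ may thus contribute in Koszul degree as low as $k - \dim S$, so the correct vanishing statement is $\pi_N(I^{(k)}_G) = 0$ for $k > N + \dim S$. Since $S$ is a finite dimensional manifold this is still eventually zero for fixed $N$, so the series stabilizes in each finite quotient and the conclusion of the corollary holds; but the threshold you state is not quite right as written. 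Everything else — the inclusion of $e^{-t\slashed D^2}$ as a Koszul-degree-zero basic element, the term-by-term basicness, and passing to the inverse limit of basic subcomplexes — is correct and matches the spirit of the paper's argument.
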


\begin{lemma}\label{equivariant supertrace}
The supertrace
\begin{equation}
   sTr: \overline \Kos(\g^*) \otimes \Omega_S(\mathcal K(\E)) \raw \overline \Kos(\g^*) \otimes \Omega_S
\end{equation}
is $G$-equivariant.
\end{lemma}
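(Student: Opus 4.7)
The plan is to reduce the claim to three standard equivariance facts. Since $sTr$ acts as the identity on the $\overline \Kos(\g^\ast)$ tensor factor, and since the infinitesimal $G$-action on each side decomposes as $\mathcal L_\xi \otimes \id + \id \otimes \mathcal L_\xi$ with respect to the tensor product, it suffices to verify that the underlying map
\begin{equation}
   sTr : \Omega_S(\mathcal K(\E)) \raw \Omega_S
\end{equation}
intertwines the $G$-actions on source and target induced by the $G$-action on $(\E \raw X \raw S, g^{X/S})$.

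I would then factor this map as the composition of (i) restriction of the smooth family of integral kernels $k \in C^\infty(X \times_\pi X; \E \boxtimes \E^\ast)$ to the diagonal $X \hookrightarrow X \times_\pi X$, producing a section of $\End(\E)$; (ii) the pointwise supertrace $\End(\E) \raw \underline{\C}$; and (iii) fiber integration along $\pi$ against the vertical volume form $d\text{Vol}_{X/S}$. Each factor is $G$-equivariant under the standing hypothesis that the $G$-action preserves all the geometric data: the diagonal inclusion is equivariant because $G$ acts diagonally on $X \times_\pi X$; the pointwise supertrace is invariant under conjugation by the linear $G$-action on $\E$; and fiber integration commutes with the $G$-action because $\pi$ is $G$-equivariant and $d\text{Vol}_{X/S}$ is $G$-invariant, the latter following from $G$ preserving $g^{X/S}$. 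Composition of $G$-equivariant maps is $G$-equivariant, completing the argument.

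I do not anticipate a genuine obstacle; the content of the lemma is essentially a bookkeeping translation of the hypothesis that $G$ acts by isometries of the vertical Riemannian structure, combined with the elementary invariance of the algebraic supertrace under conjugation. The only point to track carefully is that the infinitesimal $\mathcal L_\xi$ on $\Omega_S(\mathcal K(\E))$ simultaneously involves the de Rham Lie derivative on the form part and the adjoint Lie derivative on the family of smoothing kernels, and that these are matched on each side step-by-step through the composition above.
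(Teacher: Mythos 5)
Your proof is correct and rests on exactly the same three ingredients as the paper's argument: equivariance of the diagonal restriction of the kernel, conjugation-invariance of the pointwise supertrace on $\End(\E)$, and $G$-invariance of $d\mathrm{Vol}_{X/S}$ coming from $G$ preserving $g^{X/S}$. The paper expresses this as a single chain of equalities on $(g.K)(\varphi)$ with a change of variables in the fiber integral, while you factor the map into three equivariant pieces; this is a cleaner packaging of the same computation and, if anything, makes explicit the conjugation-invariance step that the paper's notation suppresses.
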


\begin{proof}
For $K \in \mathcal K(\E)$, $\varphi \in C^\infty(\E)$, and $g \in G$ we have
\begin{equation}
\begin{split}
   (g.K)(\varphi) &= gK(g^{-1}.\varphi)\\
   &= g \int_{X/S}k(x,y)\varphi(g.y)\\
   &= \int_{X/S}k(g^{-1}x, y)\varphi(g.y)d\text{Vol}_{X/S}(y)\\
   &= \int_{X/S}k(g^{-1}x, g^{-1}y)\varphi(y) d\text{Vol}_{X/S}(y)
\end{split}
\end{equation}
where we have used the $G$-invariance of $g^{X/S}$ in the fourth equality. Thus
\begin{equation}
\begin{split}
   sTr(g.K)(z) &= \int_{X_z/S_z} sTr \left(k(g^{-1}x, g^{-1}x)\right) d\text{Vol}_{X/S}(x)\\
   &= sTr(K)(g^{-1}.z) = g.sTr(K)(z)
\end{split}
\end{equation}
\end{proof}
Lemma \ref{equivariant supertrace} implies that the map
\begin{equation}
   1\otimes sTr: \overline \Kos(\g^*) \otimes \Omega_S(\mathcal K(\E)) \raw \overline \Kos(\g^*)\otimes \Omega_S
\end{equation}
is $G$-equivariant and preserves horizontal forms so it restricts to a map on basic forms
\begin{equation}
   sTr: (\overline \Kos(\g^\ast) \otimes \Omega_S(\mathcal K(\E)))_{\bas} \raw (\overline \Kos(\g^\ast) \otimes \Omega_S)_{\bas}.
\end{equation}

Analogous to (\ref{scaled superconnection}), for $t > 0$ the scaled equivariant superconnection satisfies
\begin{equation}
\begin{split}
\mathbb B^t_G &:= t^{1/2}\slashed D + \left(\tilde \nabla^\E_G + \frac{1}{2}k_G\right) - t^{-1/2}\frac{1}{4}c(T)_G\\
&= t^{1/2}\delta_t\circ \mathbb B_G \circ \delta_t^{-1}.
\end{split}
\end{equation}

We now define the Chern character of the scaled equivariant superconnection to be 
\begin{equation}
\begin{split}
\ch(\mathbb B^t_G) &:= sTr(e^{-\mathcal F^t_G})\\
&= sTr(e^{-\delta_t t\mathcal F_G \delta_t^{-1}})\\
&= \delta_t sTr(e^{-t\mathcal F_G})\delta_t^{-1}
\end{split}
\end{equation}
which is a form in $\Omega_{(S_G)_\nabla}$.

\begin{proposition}\label{pullback chern character}
The Chern Weil map (\ref{pullback}) applied to the Chern character is a form in $\Omega_{S_Q}$ that satisfies 
\begin{equation}
f_S^\ast \ch(\mathbb B^t_G) = \ch(\mathbb B^t_Q)
\end{equation}
\end{proposition}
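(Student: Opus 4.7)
The plan is to chase the definition
\begin{equation}
\ch(\mathbb B^t_G) = \delta_t\, sTr(e^{-t\mathcal F_G})\,\delta_t^{-1}
\end{equation}
and commute the Chern--Weil pullback $f_S^\ast$ through each of the three structural ingredients: the rescaling operator $\delta_t$, the Volterra expansion of the heat operator, and the supertrace. Throughout, the argument runs in parallel with the proof of Proposition \ref{pullback supercurvature}.

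First, I would commute $f_S^\ast$ past $\delta_t$. Since $f_S^\ast$ is a morphism of differential graded algebras that preserves total degree, and $\delta_t$ multiplies a form of total degree $i$ by $t^{-i/2}$, the two commute. The identity reduces to proving $f_S^\ast sTr(e^{-t\mathcal F_G}) = sTr(e^{-t\mathcal F_Q})$ as forms in $\Omega_{S_Q}$.

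Second, I would handle the exponential via its Volterra series (\ref{Volterra2}). The operator $e^{-t\slashed D^2}$ is $G$-invariant because $\slashed D$ and $g^{X/S}$ are, so it represents the same family of vertical smoothing operators on both sides. By Proposition \ref{pullback supercurvature}, $f_S^\ast$ intertwines the positive-degree part $\mathcal F_G^+$ with $\mathcal F_Q^+$. Because $f_S^\ast$ is an algebra morphism and commutes with composition of vertical operators and with the finite-dimensional simplex integration $\int_{\Delta_k}$ (both of which are intrinsic to the fiberwise operator algebra and untouched by the classifying direction), applying $f_S^\ast$ term-by-term yields $f_S^\ast I^{(k)}_G = I^{(k)}_Q$, and hence $f_S^\ast e^{-t\mathcal F_G} = e^{-t\mathcal F_Q}$ in $\Omega_{S_Q}(\mathcal K(\E))$.

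Third, I would verify that $f_S^\ast$ commutes with the supertrace. The supertrace factorizes as restriction to the diagonal $X \hookrightarrow X \times_\pi X$, the pointwise $\End(\E)$-supertrace, and fiberwise integration against $d\mathrm{Vol}_{X/S}$. By Lemma \ref{equivariant supertrace} the composition is $G$-equivariant, and each constituent acts transverse to the Chern--Weil pullback direction. Concretely, under the identification (\ref{pullback sections}), $f_S^\ast$ factors through the $G$-equivariant pullback $p^\ast$ followed by descent to basic forms, and both operations manifestly commute with $sTr$. Chaining the three commutations gives $f_S^\ast \ch(\mathbb B^t_G) = \ch(\mathbb B^t_Q)$.

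The main obstacle is the second step: since $\mathcal F_G^+$ has components in the completed Koszul algebra $\overline\Kos(\g^\ast)$, the Volterra series is not a finite sum in the full completion, although it is finite in each fixed total degree of $\Omega_{(S_G)_\nabla}$. One must therefore check that $f_S^\ast$ respects the inverse-limit structure of $\overline\Kos(\g^\ast)$ and commutes with the associated degreewise truncations; this reduces the compatibility to a finite check at each total degree, where the argument from Proposition \ref{pullback supercurvature} applies verbatim.
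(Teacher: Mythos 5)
Your proposal is correct and follows the same two-step strategy as the paper: establish $f_S^\ast e^{-t\mathcal F_G} = e^{-t\mathcal F_Q}$ from Proposition \ref{pullback supercurvature}, then check that $f_S^\ast$ commutes with the supertrace (and, implicitly, with the degree-rescaling $\delta_t$). You spell out the term-by-term Volterra argument and the degreewise treatment of the completion $\overline\Kos(\g^\ast)$, which the paper compresses into the phrase ``it follows from \ref{pullback supercurvature} that,'' but the underlying reasoning is identical.
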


\begin{proof}
It follows from \ref{pullback supercurvature} that
\begin{equation}
f_S^* e^{-t\mathcal F_G} = e^{-t\mathcal F_Q}
\end{equation}
and a check of the definitions shows that
\begin{equation}
\begin{tikzcd}
(\overline \Kos(\g^*) \otimes \Omega_S(\mathcal K(\E)))_{\bas} \arrow[r, "sTr"] \arrow[d, "f_S^\ast"] &
(\overline \Kos(\g^*) \otimes \Omega_S)_{\bas} \arrow[d, "f_S^\ast"]\\
\Omega_{S_Q}(\mathcal K(\E_Q)) \arrow[r, "sTr"] & \Omega_{S_Q}
\end{tikzcd}
\end{equation}
commutes.

\end{proof}

Let $\nabla^{X/S}_G$ be the covariant derivative on $(\overline \Kos(\g^*) \otimes \Omega_X(T(X/S)))_{\bas}$ given by
\begin{equation}
\nabla^{X/S}_G := d_K + \nabla^{X/S} - \theta^a\tau_a.
\end{equation}
Its curvature
\begin{equation}
   \Omega^{X/S}_G := (\nabla^{X/S}_G)^2
\end{equation}
is a form in $(\Kos(\g^*)\otimes \Omega_X(\so(T(X/S))))_{\bas}$.
By Lemma \ref{pullback equivariant curvature}, it satisfies
\begin{equation}\label{pullback vertical curvature}
f_X^\ast \Omega^{X/S}_G = \Omega^{X/S}_Q
\end{equation}
where $\Omega^{X/S}_Q$ is (\ref{vertical Q curvature}). 
The $\hat A$ polynomial applied to the curvature $\Omega^{X/S}_G$ is the unbounded form
\begin{equation}
\hat A(\Omega^{X/S}_G) := \text{det}^{1/2}\left(\frac{\Omega^{X/S}_G/2}{\sinh(\Omega^{X/S}_G/2)}\right)
\end{equation}
lying in $(\overline \Kos(\g^\ast) \otimes \Omega_X(\so(T(X/S))))_{\bas}$.
Applying (\ref{pullback vertical curvature}) gives
\begin{equation}
\hat A(\Omega_Q^{X/S}) = f_X^\ast \hat A(\Omega_G^{X/S}).
\end{equation}

Finally, let $F^{\E}_G \in (\Kos(\g^\ast)\otimes \Omega_X(\End(\E)))_{\basic}$ be the curvature of $\nabla^\E_G$ and set 
\begin{equation}
F^{\E/\mathbb S}_G := F_G^{\E} - c(\alpha(\Omega_G^{X/S}))
\end{equation} in $(\Kos(\g^\ast)\otimes \Omega_X(\End(\E)))_{\basic}$, where $\alpha: \so(n) \raw \mathfrak{spin}_n \subset \Cl_n$ is (\ref{so rep}) and $c: \Cl(T(X/S)) \raw \End(\E)$ is the Clifford map.
Again, Lemma \ref{pullback equivariant curvature} implies that
\begin{equation}
F^{\E/\mathbb S}_Q = f_X^\ast F^{\E/\mathbb S}_G
\end{equation}
and applying the relative chern character (\ref{relative chern}) gives the relation
\begin{equation}
\ch_{\E/\mathbb S}(F^{\E/\mathbb S}_Q) = f_X^\ast \ch_{\E/\mathbb S}(F^{\E/\mathbb S}_G).
\end{equation}

We now prove the main theorem.
\begin{theorem}\label{main theorem}
There is an equality
\begin{equation}
   \lim_{t\raw 0}\ch(\mathbb B^t_G) = \int_{X/S} \hat A(\Omega^{X/S}_G) \ch_{\E/\mathbb S}(F^{\E/\mathbb S}_G).
\end{equation}
of unbounded forms in $\overline \Omega_{(S_G)_\nabla}$.
\end{theorem}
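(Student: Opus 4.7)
The plan is to exploit the equivariance-to-families principle developed throughout the paper: for any principal $G$-bundle with connection $r: Q \raw M$ with classifying map $f: M \raw B_\nabla G$, the Chern-Weil pullbacks $f_S^*$ and $f_X^*$ convert every equivariant object on the differential Borel quotients into the ordinary object on the geometric Clifford family $(\E_Q \raw X_Q \raw S_Q, g^{X/S}_Q, H_Q, \nabla^\E_Q)$. I will apply $f_S^*$ to both sides of the claimed identity, reduce to Bismut's families index theorem (Theorem \ref{family index theorem}) on $S_Q$, and then invoke a universality argument to conclude the equality in $\overline \Omega_{(S_G)_\nabla}$ itself.

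For the pullback step, I combine Proposition \ref{pullback chern character} with the pullback relations
\begin{equation}
f_X^* \hat A(\Omega^{X/S}_G) = \hat A(\Omega^{X/S}_Q), \qquad f_X^* \ch_{\E/\mathbb S}(F^{\E/\mathbb S}_G) = \ch_{\E/\mathbb S}(F^{\E/\mathbb S}_Q)
\end{equation}
established just above the statement. Since $X_Q \raw S_Q$ is the pullback of $(X_G)_\nabla \raw (S_G)_\nabla$ along $f$ and the vertical fiber is unchanged, fiber integration commutes with $f_S^*$, giving
\begin{equation}
f_S^* \int_{X/S}\hat A(\Omega^{X/S}_G)\ch_{\E/\mathbb S}(F^{\E/\mathbb S}_G) = \int_{X_Q/S_Q}\hat A(\Omega^{X/S}_Q)\ch_{\E/\mathbb S}(F^{\E/\mathbb S}_Q).
\end{equation}
Theorem \ref{family index theorem} applied to the Clifford family on $S_Q$ identifies the right-hand side with $\lim_{t\raw 0}\ch(\mathbb B^t_Q)$, and since $f_S^*$ is degree-preserving and $\mathbb R$-linear (so it commutes with $\lim_{t\raw 0}$ componentwise), Proposition \ref{pullback chern character} yields
\begin{equation}
f_S^*\Bigl(\lim_{t\raw 0}\ch(\mathbb B^t_G)\Bigr) = f_S^*\int_{X/S}\hat A(\Omega^{X/S}_G)\ch_{\E/\mathbb S}(F^{\E/\mathbb S}_G).
\end{equation}

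The main obstacle is upgrading this pullback-level identity to an equality in $\overline \Omega_{(S_G)_\nabla}$. I would argue that two basic forms in $(\overline \Kos(\g^*) \otimes \Omega_S)_{\bas}$ that agree after $f_S^*$ for every principal $G$-bundle with connection over every test manifold are necessarily equal. Concretely, fixing a truncation in total degree, one takes a finite-dimensional Narasimhan-Ramanan approximation $(Q_n\raw M_n, \Phi_n)$ for which the universal Chern-Weil homomorphism $\overline \Kos(\g^*)^G \raw \Omega_{M_n}$, $\theta^a \mapsto \phi_n^a$ and $\chi^a \mapsto \omega_n^a$, is injective up to that degree; tensoring with $\mathrm{id}_{\Omega_S}$ preserves injectivity on the basic subcomplex, and the claim then follows on each factor of the product $(\overline \Kos(\g^*)\otimes \Omega_S)_{\bas} \simeq \prod_k(\cdot)^k$. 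One must also separately verify that $\lim_{t\raw 0}\ch(\mathbb B^t_G)$ exists degree-by-degree in $\overline \Omega_{(S_G)_\nabla}$; after pullback this reduces to the Getzler rescaling analysis in \cite{berline2003heat}, which is uniform enough in $(Q,\Phi)$ to pass back through the universality step.
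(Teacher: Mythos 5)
Your proposal follows the same skeleton as the paper's proof: pull back along the Chern--Weil map $f_S^\ast$ to a concrete geometric Clifford family $\E_Q \raw X_Q \raw S_Q$, apply Bismut's Theorem \ref{family index theorem}, and recognize the result as the pullback of the equivariant right-hand side via Proposition \ref{pullback chern character} and the relations $f_X^\ast \hat A(\Omega^{X/S}_G) = \hat A(\Omega^{X/S}_Q)$, $f_X^\ast \ch_{\E/\mathbb S}(F^{\E/\mathbb S}_G) = \ch_{\E/\mathbb S}(F^{\E/\mathbb S}_Q)$, and commutation of fiber integration with $p^\ast$. Where you diverge from the paper is in how you justify the final ``universality'' step of upgrading the pullback-level identity to equality in $\overline\Omega_{(S_G)_\nabla}$. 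The paper treats this as immediate from the sheaf-theoretic framework of \cite{BNablaG}: elements of $\overline\Omega_{(S_G)_\nabla}$ are natural transformations of simplicial presheaves, and any morphism from a test manifold into $(S_G)_\nabla$ factors (by the definition of the differential Borel quotient) through some $f_S: S_Q \raw (S_G)_\nabla$, so equality after all such $f_S^\ast$ is literally equality of natural transformations — no injectivity of the Chern--Weil homomorphism is required. Your Narasimhan--Ramanan route reaches the same conclusion but is heavier: the assertion that tensoring with $\mathrm{id}_{\Omega_S}$ preserves injectivity on the basic subcomplex is plausible but needs a supporting argument (the basic forms are not a tensor product of basics), and degree-by-degree injectivity on the completion requires care with the inverse limit. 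What your version buys is independence from the Yoneda-type formalism, which some readers may prefer. You also flag that $\lim_{t\raw 0}\ch(\mathbb B^t_G)$ should exist degree-by-degree and that $f_S^\ast$ should commute with that limit; the paper silently uses both, and your observation that this follows from the Getzler rescaling on each $S_Q$ together with the sheaf-theoretic identification is the right way to close that loop.
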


\begin{proof}
The statement is proved if we show that the equation is satisfied on pullbacks along maps $f: M \raw B_\nabla G$:
\begin{equation}
\begin{split}
\lim_{t\raw 0} f_S^\ast \ch(\mathbb B^t_G) 
&= \lim_{t\raw 0} \ch(\mathbb B^t_Q)\\
&= \int_{X_Q/S_Q} \hat A(\Omega_Q^{X/S})\ch_{\E/\mathbb S}(F_Q^{\E/\mathbb S})\\
&= \int_{X_Q/S_Q} f_X^\ast \hat A(\Omega_G^{X/S}) f_X^\ast \ch_{\E/\mathbb S}(F^{\E/\mathbb S}_G)\\
&= f_S^\ast \int_{X/S} \hat A(\Omega^{X/S}_G) \ch_{\E/\mathbb S}(F^{\E/\mathbb S}_G)
\end{split}
\end{equation}
where we used Proposition \ref{pullback chern character} in the first equality and Theorem \ref{family index theorem} in the second. The fourth equality follows from the equality 
\begin{equation}
\int_{(Q\times X)/(Q\times S)}p^\ast \omega = p^\ast \int_{X/S} \omega
\end{equation}
 for all $\omega \in \Omega_X$.
\end{proof}

\bibliographystyle{alpha}
\bibliography{bibliography}

\end{document}